\newtheorem{thm}{Theorem}[section]
\newtheorem{cor}[thm]{Corollary}
\newtheorem{rk}[thm]{Remark}
\newtheorem{lemma}[thm]{Lemma}
\newtheorem{prop}[thm]{Proposition}
\newtheorem{defn}[thm]{Definition}
\newcommand{\R}{\mathbb{R}}
\newcommand{\E}{\mathbb{E}}
\newcommand{\M}{\mathbb{M}}
\newcommand{\p}{\mathbb{P}}
\newcommand{\N}{\mathbb{N}}
\newcommand{\Z}{\mathbb{Z}}
\newcommand{\C}{\mathbb{C}}
\newcommand{\I}[1]{{\bf 1}_{#1}}
\DeclareMathOperator{\tr}{tr}
\DeclareMathOperator{\rank}{rank}
\DeclareMathOperator{\essup}{ess\ sup}
\DeclarePairedDelimiter\ceil{\lceil}{\rceil}
\newcommand{\Prob}[1]{\mathbb{P}\left( #1 \right)}
\newcommand{\abs}[1]{\left\vert #1 \right\vert}
\newcommand{\norm}[1]{\left\Vert #1 \right\Vert}
\author{Bart{\l}omiej \ Polaczyk}
\address{Institute of Mathematics,
University of Warsaw}
\email{polaczyk.b@gmail.com}
\title[Concentration of the spectral distribution 
	of random matrices]
	{Concentration of the empirical spectral distribution 
	of random matrices with dependent entries}
\subjclass[2010]{60B20} 
\keywords{random matrix theory, 
 empirical spectral distribution, 
 concentration of measure}
\thanks{
 Research partially supported by the National Science Centre, 
 Poland, grants no. 2015/18/E/ST1/00214}
\begin{document}

\begin{abstract}
 We investigate concentration properties of spectral measures of 
 Hermitian random matrices with partially dependent entries.
 More precisely,
 let $X_n$ be a Hermitian random matrix of size $n\times n$ that can be 
 split into independent blocks of the size at most $d_n=o(n^2)$. 
 We prove that under some mild conditions on the distribution
 of the entries of $X_n$, the empirical spectral measure of 
 $X_n$ concentrates around its mean. 
 
 The main theorem is a strengthening of a recent result by
 Kemp and Zimmerman, where the size of blocks grows as $o(\log n)$.
 As an application, we are able to upgrade the results of 
 Schenker and Schulz on the convergence in expectation to 
 the semicircle law of a class of random matrices with dependent 
 entries to weak convergence in probability. 
 Other applications include patterned random matrices, 
 e.g. matrices of Toeplitz, Hankel or circulant type
 and matrices with heavy tailed entries in the domain of
 attraction of the Gaussian distribution. 
\end{abstract}

\maketitle


\addtocontents{toc}{\protect\setcounter{tocdepth}{1}}
\tableofcontents

\section{Introduction} 

Throughout this paper we will denote by $M_n$ the space of 
$n\times n$ matrices over the scalar field $\mathbb{C}$
equipped with the Hilbert-Schmidt norm 
$\norm{A}_{HS}:=\sqrt{\tr AA^\ast}$, 
where $(A^\ast)_{ij} = \overline{A_{ji}}$.
We set $M_n^{sa}$ to be the vector subspace of $M_n$ 
consisting of Hermitian matrices 
(i.e. matrices satisfying the condition $A^*=A$). 
A (general) random matrix is 
a random variable taking values in the space $M_n$.

Let $X$ be a random $n\times n$ Hermitian matrix. Its all eigenvalues
lie on the real line and thus we may consider its 
empirical spectral distribution (ESD) being equal to
\[
 L_n^X := \frac{1}{n}\sum_{i=1}^{n}\delta_{\lambda_i},
\]
where $\lambda_1\leq\ldots\leq\lambda_n$ are the eigenvalues of $X$.
It is worth to remark that since $X$ is random then so is $L_n^X$ as a 
distribution on the real line. One can thus consider its 
expected value, which is now a deterministic probability measure 
$\E L_n^X$ s.t. for every Borel set $A$,
\[
 \E L_n^X (A) := \E (L_n^X(A)).
\]

Studying the asymptotic properties of such distributions
was firstly motivated by questions that arose in various models of
quantum physics (c.f. \cite{Wig,BIPZ,t'H}).
Since then, random matrix theory has evolved significantly, 
becoming an independent and influential branch of mathematics
(we refer to \cite{AGZ,BaiSilv,PaSh,Tao}
for a	 detailed exposition of random matrix theory).
While the first results in random matrix theory considered matrices 
with independent entries (up to a symmetry condition), over the last 
20 years more and more attention has been directed also to investigation 
of matrices with dependencies between entries
(see e.g. \cite{Ada11,BHS11,BDJ,Mec09,O'Rour,Zhou}).

The paper will be organized in the following way. 
We begin with presenting the results in Section~\ref{S:MainRes}. 
Then, in Section~\ref{S:Examples} we will show some
applications, which will cover
Wigner-type theorems, matrices with heavy tailed entries in the 
domain of attraction of the Gaussian distribution, patterned and band matrices.
Section~\ref{S:AuxLemm} contains some facts from linear algebra
and concentration of measure theory that will be used in Section~\ref{S:Proofs}
to prove our results.

\section{Main results}\label{S:MainRes}
In this paper we restrict our attention to the matrices with the block dependency
structure, i.e. matrices whose entries can be divided into blocks
which form independent random vectors. The following definition makes
this notion precise.

\begin{defn}
 We say that a random matrix $X$ of the size $n\times m$ 
 satisfies the property $\mathcal{S}(d)$ whenever there exists a partition 
 $\Pi = \{ P_1,\ldots,P_{k}\}$ of the set 
 $\{1,\ldots,n\}\times\{1,\ldots,m\}$ such that
 the vectors $Y_r=\{X_{ij}\}_{(i,j)\in P_r}$ are stochastically independent
 and the size of each partition set $P_r$ 
 does not exceed~$d$, i.e. $\abs{P_r}\leq d$ for all $r=1,\ldots,k$.
 We write shortly $X\in\mathcal{S}(d)$.
\end{defn}

The starting point of our considerations is the main theorem
of \cite{KZ}, stated below.

\begin{thm}\label{T:KZ}
 Let $X_n\in M_n^{sa}$ be a sequence of random matrices such that
 $X_n\in\mathcal{S}(d_n)$ for every~$n$ with $d_n=o(\log n)$.
 If the family 
 $\{ |(X_n)_{ij}|^2\}_{1\leq i,j \leq n\in\N}$ is uniformly integrable,
 then 
 \begin{equation*}\label{eq:Kemp}
  \forall_{f\in C_L(\R)} ~
  \int f dL_n^{\frac{1}{\sqrt{n}}X_n} 
  - \E\int f dL_n^{\frac{1}{\sqrt{n}}X_n} 
  \to_{\mathbb{P}} 0,
 \end{equation*}
 with $C_L(\R)$ denoting the set of all real 1-Lipschitz functions on $\R$
 and $(X_n)_{ij}$ denoting the entries of the matrix $X_n$.
\end{thm}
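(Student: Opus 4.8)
The plan is to reduce, by truncation, to a matrix with bounded block-independent entries, and then to show that the linear statistic concentrates around its mean by controlling its variance. Fix $f\in C_L(\R)$ and a sequence $B_n\to\infty$ to be chosen later. Let $\tilde X_n$ be obtained from $X_n$ by truncating every entry at level $B_n$, performed symmetrically so that $\tilde X_n$ stays Hermitian and, being an entrywise function of the blocks, still lies in $\mathcal{S}(d_n)$; its entries satisfy $\abs{(\tilde X_n)_{ij}}\le B_n$. Writing $M:=\tfrac1{\sqrt n}X_n$ and $\tilde M:=\tfrac1{\sqrt n}\tilde X_n$, the Hoffman--Wielandt inequality (Section~\ref{S:AuxLemm}) bounds the $2$-Wasserstein distance of the two spectral measures by
\[
 W_2\big(L_n^{M},L_n^{\tilde M}\big)^2\le\frac1n\norm{M-\tilde M}_{HS}^2
 =\frac1{n^2}\sum_{i,j}\abs{(X_n)_{ij}}^2\I{\abs{(X_n)_{ij}}>B_n},
\]
whose expectation is at most $\varepsilon_n:=\sup_{i,j,n}\E\big[\abs{(X_n)_{ij}}^2\I{\abs{(X_n)_{ij}}>B_n}\big]\to0$ by uniform integrability. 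Since $\abs{\int f\,d(L_n^{M}-L_n^{\tilde M})}\le W_2(L_n^{M},L_n^{\tilde M})$, both this discrepancy and its mean are $o_{\mathbb P}(1)$, so it suffices to prove the theorem for the bounded matrices $\tilde X_n$.

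Put $g:=\int f\,dL_n^{\tilde M}=\frac1n\tr f(\tilde M)$; by Chebyshev's inequality it is enough to show $\Var g\to0$. I would use two structural facts. First, Hoffman--Wielandt together with Cauchy--Schwarz gives, for Hermitian $A,B$, the Lipschitz bound $\abs{\frac1n\tr f(A)-\frac1n\tr f(B)}\le\frac1{\sqrt n}\norm{A-B}_{HS}$; specialised to $A=\tilde M$ this makes $g$ a $\frac1n$-Lipschitz function of the entries of $\tilde X_n$ in the Hilbert--Schmidt (Euclidean) metric. Second, the property $\mathcal{S}(d_n)$ lets me view $g$ as a function of the independent blocks $Y_1,\dots,Y_k$ and form the Doob martingale $h_r:=\E[g\mid Y_1,\dots,Y_r]$, so that $\Var g=\sum_r\E[(h_r-h_{r-1})^2]\le\sum_r\E[(g-g^{(r)})^2]$, where $g^{(r)}$ resamples $Y_r$ by an independent copy. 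Resampling one block changes $\tilde X_n$ only in the $\abs{P_r}\le d_n$ positions of $P_r$ and their conjugates, i.e. by a Hermitian matrix $E_r$ of rank at most $2d_n$ with $\norm{E_r}_{HS}=O(B_n\sqrt{d_n})$.

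The hard part will be the estimation of $\sum_r\E[(g-g^{(r)})^2]$, and it is here that worst-case bounds break down. The Hilbert--Schmidt Lipschitz bound gives $\E[(g-g^{(r)})^2]\le\frac1{n^2}\norm{E_r}_{HS}^2=O(B_n^2\abs{P_r}/n^2)$, hence, summing with $\sum_r\abs{P_r}=O(n^2)$, only $\sum_r\E[(g-g^{(r)})^2]=O(B_n^2)$; the sharper rank estimate, using that a rank-$2d_n$ perturbation shifts the empirical distribution function by at most $2d_n/n$ uniformly and integrating $f'$ across the spectrum, gives only $O(B_n^2 d_n)$. Neither tends to $0$: a worst-case martingale (Azuma) argument is blind to the spectral averaging that makes the true fluctuation of $\frac1n\tr f(\tilde M)$ of order $1/n$. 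The crux is therefore to show that resampling a single block \emph{typically} perturbs $g$ by $\ll 1/n$, i.e. to quantify the cancellation in $\tr f(\tilde M)-\tr f(\tilde M^{(r)})$ rather than bound it by $\mathrm{rank}\times\mathrm{oscillation}$.

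To capture this I would prove a Herbst-type concentration estimate for $g$ as a function of the independent blocks whose variance proxy is of the order $e^{cd_n}B_n^2/n$: the factor $1/n$ is the gain from the averaging in $\frac1n\tr f$ and the $\frac1n$-Euclidean-Lipschitz bound above, while the factor $e^{cd_n}$ is the price paid for handling a $d_n$-dimensional block of arbitrary bounded law — for instance through a net of the block support, or a $2^{d_n}$-term decomposition of its contribution to the variance. The role of the hypothesis then becomes transparent: choosing $B_n\to\infty$ slowly and using $d_n=o(\log n)$, one has $e^{cd_n}=n^{o(1)}$, so $e^{cd_n}B_n^2/n\to0$ and thus $\Var g\to0$. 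Combined with the truncation step this yields $\int f\,dL_n^{M}-\E\int f\,dL_n^{M}\to_{\mathbb P}0$ for the fixed $f\in C_L(\R)$, and since $f$ was arbitrary the theorem follows. The single genuine obstacle is this last estimate — replacing the worst-case block difference by one that sees the spectral cancellation — and it is precisely the cost of that step which forces $d_n=o(\log n)$.
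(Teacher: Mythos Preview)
Your proposal contains a genuine gap. The decisive step --- the ``Herbst-type concentration estimate for $g$ \dots whose variance proxy is of the order $e^{cd_n}B_n^2/n$'' --- is asserted but not proved; you yourself flag it as ``the single genuine obstacle''. You correctly observe that the naive bounded-differences/Efron--Stein route gives only $O(B_n^2)$ or $O(B_n^2 d_n)$ and hence fails, but you do not supply any mechanism that produces the extra $1/n$. Neither the $\tfrac{1}{n}$-Lipschitz property of $g$ nor the rank bound on a block perturbation, taken by themselves, yields a variance of order $1/n$ once you sum over the $\Theta(n^2/d_n)$ blocks; the ``spectral cancellation'' you invoke has to be established, and a net or $2^{d_n}$-term decomposition of a block does not by itself explain why the contributions across different blocks should add up to something small. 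As written, the argument is a description of what must be shown rather than a proof.

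For comparison: the paper does not prove this theorem directly --- it is quoted from Kemp--Zimmermann, whose proof (as the paper notes) uses log-Sobolev inequalities for bounded laws convolved with Gaussians, and it is precisely that log-Sobolev constant that carries an exponential dependence on the block dimension and forces $d_n=o(\log n)$; your Herbst heuristic is pointing at the same phenomenon without carrying it out. The paper's own contribution (Theorem~\ref{T:main} via Theorem~\ref{T:generalization}) takes a completely different route: after the same truncation step, it does \emph{not} attempt a martingale or log-Sobolev bound on $g$, but instead uses Talagrand's convex concentration (Theorem~\ref{T:Talagrand}) applied to the independent blocks, combined with Klein's lemma and an approximation of the compactly supported Lipschitz $f$ by a sum of $O(1/\Delta)$ convex and concave $1$-Lipschitz pieces. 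This gives a variance proxy of order $d_nB_n^2/n^2$ with \emph{no} exponential penalty in $d_n$, which is why the paper obtains the much stronger conclusion $d_n=o(n^2)$. In short, the missing idea in your approach is exactly what the paper supplies: convexity (via Klein's lemma and piecewise-linear approximation) is the device that converts the $\tfrac{1}{n}$-Lipschitz bound into genuine concentration without paying $e^{cd_n}$.
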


The proof is based on the 
concentration argument by Guionnet and Zeitouni (\cite{GZ})
and log-Sobolev inequalities for compactly supported measures
convolved with the standard Gaussian distribution derived by 
Kemp and Zimmerman.

The main result of this work may be seen as a stronger version 
of Theorem \ref{T:KZ} where more dependency is allowed, i.e. $d_n=o(n^2)$.
Before stating it, let us clarify some notation.
 
\begin{defn}
 We say that a sequence of probability measures $\mu_n$ converges weakly
 to some measure $\mu$ if $\int f\,d\mu_n \to \int f\,d\mu$ for
 all continuous bounded functions $f$.

 We say that a sequence of random probability measures 
 $\mu_n$ converges weakly in probability to the (possibly random) measure $\mu$ if
 $d(\mu_n,\mu) \to_{\mathbb{P}} 0$ 
 for some (equivalently for all) metric $d$ that metrizes
 the above notion of weak convergence. 
 
 We denote these facts
 by $\mu_n\Rightarrow\mu$ and $\mu_n\Rightarrow_{\mathbb{P}}\mu$ respectively.
\end{defn}

\begin{defn}
 We say that a sequence of random matrices $X_n\in M_n$ 
 has the property $\mathcal{L}$, $(X_n)_{n\in\N}\in\mathcal{L}$, if it satisfies the
 following Lindeberg-type condition
 \begin{equation*}\label{eq:SCondition}
  \forall_{\epsilon>0}~\
  \lim_{M\to \infty} \limsup_{n\to \infty} \Prob{
  \frac{1}{n^2}\sum_{i,j=1}^n |(X_n)_{ij}|^2 \I{\{|(X_n)_{ij}| > M\}} 
  > \varepsilon} =0.
 \end{equation*} 
\end{defn}

The main result of this work is the following theorem.

\begin{thm}\label{T:main}
 Let $X_{n}\in M_n^{sa}$ be a sequence of random matrices such that
 $X_n\in\mathcal{S}(d_n)$ for every~$n$ with $d_n=o(n^2)$.
 If $(X_{n})_{n\in\N} \in \mathcal{L}$,
 then for any metric $d$ that metrizes weak convergence of probability measures,
 \begin{equation*}\label{eq:MainThesis}
  d(L^{\frac{1}{\sqrt{n}}X_n}_n,\E L^{\frac{1}{\sqrt{n}}X_n}_n) \to_{\mathbb{P}} 0.
 \end{equation*}
 In particular, if $\E L_n^{\frac{1}{\sqrt{n}}X_n}\Rightarrow\mu$,
 then $L_n^{\frac{1}{\sqrt{n}}X_n}\Rightarrow_{\mathbb{P}}\mu$.
\end{thm}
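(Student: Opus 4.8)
The plan is to reduce the statement to a one-dimensional concentration estimate for a single Lipschitz observable and then drive that estimate with a \emph{transportation-cost} inequality whose constant is linear in $d_n$; this linearity (against the exponential dependence produced by log-Sobolev arguments) is exactly what upgrades $o(\log n)$ to $o(n^2)$. \emph{Step 1 (reduction to one test function).} Weak convergence on $\R$ is metrized by
\[
 d(\mu,\nu)=\sum_{k\ge 1}2^{-k}\min\bigl(1,\abs{\textstyle\int f_k\,d\mu-\int f_k\,d\nu}\bigr)
\]
for a fixed countable convergence-determining family $\{f_k\}$ of bounded $1$-Lipschitz functions. As this series is dominated by $1$, a routine dominated-convergence argument reduces $d(L^{X_n/\sqrt n}_n,\E L^{X_n/\sqrt n}_n)\to_{\p}0$ to proving, for each fixed bounded $1$-Lipschitz $f$, that $\int f\,dL_n^{X_n/\sqrt n}-\E\int f\,dL_n^{X_n/\sqrt n}\to_{\p}0$. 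The ``in particular'' clause is then immediate: if $\E L_n^{X_n/\sqrt n}\Rightarrow\mu$ then $d(\E L_n^{X_n/\sqrt n},\mu)\to 0$, and the triangle inequality gives $L_n^{X_n/\sqrt n}\Rightarrow_{\p}\mu$.

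\emph{Step 2 (truncation).} Fix $M>0$ and put $\tilde X_{ij}=(X_n)_{ij}\I{\{\abs{(X_n)_{ij}}\le M\}}$. Since $\abs{(X_n)_{ij}}=\abs{(X_n)_{ji}}$ the matrix $\tilde X$ remains Hermitian, and each truncated entry is a function of the block it belongs to, so $\tilde X\in\mathcal S(d_n)$ with the same partition. By the Hoffman--Wielandt inequality,
\[
 \abs{\int f\,dL_n^{X_n/\sqrt n}-\int f\,dL_n^{\tilde X/\sqrt n}}
 \le\frac{\norm{f}_{\mathrm{Lip}}}{n}\norm{X_n-\tilde X}_{HS}
 =\norm{f}_{\mathrm{Lip}}\Bigl(\tfrac{1}{n^2}\textstyle\sum_{i,j}\abs{(X_n)_{ij}}^2\I{\{\abs{(X_n)_{ij}}>M\}}\Bigr)^{1/2},
\]
and the same bound controls the difference of the two expectations. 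Hypothesis $\mathcal L$ makes both errors uniformly small in probability as $M\to\infty$, so it suffices to establish the concentration of Step~1 under the extra assumption $\abs{(X_n)_{ij}}\le M$.

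\emph{Step 3 (transportation and concentration --- the heart).} With bounded entries, each independent block $Y_r$ is supported in a Euclidean ball of diameter $O(M\sqrt{d_n})$. Every probability measure carried by a set of diameter $D$ satisfies Talagrand's transportation inequality $T_1$ with constant $D^2/4$ (from $W_1(\rho,\nu)\le D\,\norm{\rho-\nu}_{TV}$ and Pinsker), so the distribution of each block obeys $T_1(O(M^2 d_n))$. Because $T_1$ tensorizes over independent factors with no dimensional loss in the constant (equivalently, by Talagrand's concentration inequality for product measures), the joint distribution of the entries of $\tilde X$ satisfies $T_1(O(M^2 d_n))$ for the Hilbert--Schmidt metric. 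On the other hand Hoffman--Wielandt shows $\tilde X\mapsto\int f\,dL_n^{\tilde X/\sqrt n}$ is $\tfrac{\norm{f}_{\mathrm{Lip}}}{n}$-Lipschitz for that metric. The Bobkov--Götze characterization converts $T_1$ into sub-Gaussian concentration with variance proxy $(\text{constant})\times(\text{Lipschitz constant})^2$, yielding
\[
 \Prob{\abs{\int f\,dL_n^{\tilde X/\sqrt n}-\E\int f\,dL_n^{\tilde X/\sqrt n}}>t}
 \le 2\exp\Bigl(-\frac{c\,n^2 t^2}{M^2 d_n\,\norm{f}_{\mathrm{Lip}}^2}\Bigr),
\]
which tends to $0$ for every fixed $M,t$ exactly because $d_n=o(n^2)$.

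The crux, and the main obstacle, is the choice made in Step~3. Blocks may be genuinely high-dimensional with arbitrary internal dependence, so no dimension-free Poincaré or log-Sobolev constant is available: the log-Sobolev route of \cite{KZ} forces a Gaussian convolution and pays a factor \emph{exponential} in $d_n$, confining it to $d_n=o(\log n)$. Switching to $T_1$ is what unlocks $d_n=o(n^2)$, since the transportation constant of a bounded block grows only \emph{linearly} in $d_n$ while still tensorizing across the independent blocks without dimensional penalty. The remaining work is bookkeeping that I would isolate as lemmas in Section~\ref{S:AuxLemm}: checking that symmetric truncation respects both the Hermitian constraint and the partition, pinning down the diameter-to-$T_1$ and tensorization constants, and verifying that the Hoffman--Wielandt Lipschitz bound survives the $1/\sqrt n$ scaling.
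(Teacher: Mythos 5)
There is a genuine gap, and it sits exactly at the step you call the heart of the argument. The claim in Step~3 that ``$T_1$ tensorizes over independent factors with no dimensional loss in the constant'' is false: dimension-free tensorization is a property of the quadratic inequality $T_2$, not of $T_1$. What is true is that if each of the $k\approx n^2/d_n$ blocks satisfies $T_1(C_r)$ with $C_r=O(M^2d_n)$, the product satisfies $T_1\bigl(\sum_r C_r\bigr)=T_1\bigl(O(M^2n^2)\bigr)$ for the Hilbert--Schmidt metric. Composing with the $\tfrac1n$-Lipschitz map $X\mapsto\int f\,dL_n^{X/\sqrt n}$ then gives a variance proxy of order $M^2n^2\cdot n^{-2}=M^2$, i.e.\ a bound $2\exp(-ct^2/M^2)$ with no decay in $n$; your displayed exponent $-cn^2t^2/(M^2d_n)$ does not follow. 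Nor can the dimension-free version be repaired: by Bobkov--G\"otze it would assert that every Lipschitz function of $N$ independent bounded coordinates is sub-Gaussian with an $N$-independent constant, and this fails already for $x\mapsto\dist_2(x,A)$ on $\{0,1\}^N$ with $A$ the set of points with at most $N/2$ ones, which deviates from its median by $N^{1/4}$ with probability bounded below. The parenthetical ``equivalently, by Talagrand's concentration inequality for product measures'' points to the real issue: Talagrand's product-measure inequality concentrates only \emph{convex} Lipschitz functions, and the functional $X\mapsto\int f\,dL_n^{X/\sqrt n}$ is not convex for a general test function $f$ --- by Klein's lemma (Lemma~\ref{L:Klein}) it is convex exactly when $f$ is.

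This is precisely the obstacle the paper's proof is built around: after the truncation (which you do correctly, up to the paper's sharper cutoff $\varepsilon a_n$ needed only for Theorem~\ref{T:generalization}), one approximates $f\in C_c(\R)\cap C_L(\R)$ by a staircase $f_\Delta$ that decomposes into $O(M/\Delta)$ convex and concave $1$-Lipschitz pieces, applies the convex concentration property CCP$(4,16d_n\varepsilon^2a_n^2)$ from Theorem~\ref{T:Talagrand} to each piece via Corollary~\ref{C:HW} and Lemma~\ref{L:Klein}, and only then sums the tail bounds. That convex/concave decomposition is the missing ingredient in your write-up, and without it Step~3 does not close. A secondary, fixable point: your Step~1 establishes convergence for one particular metric built from bounded Lipschitz functions, but the statement asserts it for \emph{any} metric metrizing weak convergence; passing between such metrics for the moving target $\E L_n$ requires tightness of $\E L_n^{X_n/\sqrt n}$, which is where Propositions~\ref{P:Tight} and~\ref{P:Reduction} enter.
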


The following observations show connection between Theorems \ref{T:KZ} and \ref{T:main}. 

\begin{rk}\label{Rk:UI}
 It follows easily from Markov's inequality that if the family 
 $\lbrace \abs{(X_n)_{ij}}^2\rbrace_{1\leq i,j\leq n\in\N}$ is uniformly integrable,
 then $(X_n)_{n\in \N}\in\mathcal{L}$.
\end{rk}

\begin{prop}\label{P:Tight}
 If $(X_n)_{n\in\N}\in\mathcal{L}$, 
 then the sequence $\E L_n^{\frac{1}{\sqrt{n}}X_n}$ is tight.
\end{prop}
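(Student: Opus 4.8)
The plan is to establish tightness directly via a second-moment estimate combined with a truncation argument driven by the Lindeberg-type condition $\mathcal{L}$; the block independence assumption $\mathcal{S}(d_n)$ plays no role here, so I would invoke only $\mathcal{L}$. Writing $\mu_n := L_n^{\frac{1}{\sqrt n}X_n}$, tightness of the deterministic sequence $\E L_n^{\frac{1}{\sqrt n}X_n}$ amounts to producing, for each $\eta>0$, some $R>0$ with $\sup_n \E L_n^{\frac{1}{\sqrt n}X_n}(\{|x|>R\})\le\eta$. Since each $\E L_n^{\frac{1}{\sqrt n}X_n}$ is an individual probability measure on $\R$, hence tight on its own, it suffices to control the $\limsup_{n}$ of these tails and then absorb the finitely many remaining indices by enlarging $R$.

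The identity I would exploit is that the second moment of $\mu_n$ equals the normalized squared Hilbert--Schmidt norm of $X_n$:
\[
 \int x^2\, d\mu_n = \frac{1}{n}\sum_{i=1}^n \frac{\lambda_i^2}{n} = \frac{1}{n^2}\tr(X_n^2) = \frac{1}{n^2}\sum_{i,j=1}^n \abs{(X_n)_{ij}}^2,
\]
where $\lambda_i$ are the eigenvalues of $X_n$ and I use that $X_n$ is Hermitian, so $\tr(X_n^2)=\tr(X_nX_n^\ast)$. Markov's inequality applied to the random measure $\mu_n$ gives $\mu_n(\{|x|>R\})\le R^{-2}\int x^2\,d\mu_n$, which reduces the problem to controlling the random quantity $\frac{1}{n^2}\sum_{i,j}\abs{(X_n)_{ij}}^2$.

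Truncation enters at this point. For a threshold $M$ I would split the sum at level $M$: the bounded part $\frac{1}{n^2}\sum_{i,j}\abs{(X_n)_{ij}}^2\I{\{|(X_n)_{ij}|\le M\}}$ is at most $M^2$ deterministically, since there are $n^2$ summands each bounded by $M^2$, while the remaining part $\frac{1}{n^2}\sum_{i,j}\abs{(X_n)_{ij}}^2\I{\{|(X_n)_{ij}|> M\}}$ is exactly the object appearing in condition $\mathcal{L}$. Introducing the good event $E_{n,M}:=\bigl\{\frac{1}{n^2}\sum_{i,j}\abs{(X_n)_{ij}}^2\I{\{|(X_n)_{ij}|>M\}}\le 1\bigr\}$, on $E_{n,M}$ the second moment is at most $M^2+1$, whereas on $E_{n,M}^c$ I use the trivial bound $\mu_n(\{|x|>R\})\le 1$. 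Splitting the expectation over $E_{n,M}$ and its complement yields
\[
 \E L_n^{\frac{1}{\sqrt n}X_n}(\{|x|>R\}) \le \frac{M^2+1}{R^2} + \Prob{E_{n,M}^c}.
\]
Given $\eta>0$, I would first use $\mathcal{L}$ (with $\varepsilon=1$) to choose $M$ so large that $\limsup_n \Prob{E_{n,M}^c}<\eta/2$, and then, with $M$ fixed, pick $R$ so large that $(M^2+1)/R^2<\eta/2$. Taking $\limsup_n$ gives $\limsup_n \E L_n^{\frac{1}{\sqrt n}X_n}(\{|x|>R\})<\eta$, which is the desired tail control.

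The argument is largely mechanical once the second-moment identity is recorded; the one subtle point I would treat carefully is that $\mathcal{L}$ controls the heavy-entry contribution only in probability, not in expectation, so the naive bound on $\E\int x^2\,d\mu_n$ need not even be finite. Routing the estimate through the good/bad event decomposition rather than directly through the expected second moment is precisely what circumvents this, and the passage from the $\limsup$ tail bound to genuine tightness of the whole sequence is the only other place deserving an explicit word.
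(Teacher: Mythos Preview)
Your proposal is correct and follows essentially the same route as the paper: both use the identity $\int x^2\,dL_n=\frac{1}{n^2}\|X_n\|_{HS}^2$, Markov's inequality for the tail of $\E L_n$, and a decomposition over the good/bad event where the truncated Hilbert--Schmidt norm is controlled by condition~$\mathcal{L}$. The paper's argument is slightly terser (and truncates at $|(X_n)_{ij}|^2>M$ rather than $|(X_n)_{ij}|>M$, yielding $(M+1)/K^2$ instead of your $(M^2+1)/R^2$), but the substance is identical, and your explicit handling of the passage from the $\limsup$ bound to tightness of the full sequence is a point the paper leaves implicit.
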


\begin{prop}\label{P:Reduction}
 If the sequence $\E L_n^{\frac{1}{\sqrt{n}}X_n}$ is tight, 
 then the following conditions are equivalent:
 \begin{enumerate}[label=\roman*)]
  \item \label{eq:CondMetr}
  $
  d(L_n^{\frac{1}{\sqrt{n}}X_n},\E L_n^{\frac{1}{\sqrt{n}}X_n})\to_{\mathbb{P}}0
  $
  for any metric $d$ that metrizes weak convergence of probability measures,
  \item 
 $\label{eq:CondFuncC_b}
  \forall_{f\in C_b(\R)} \quad
  \int fdL^{\frac{1}{\sqrt{n}}X_n}_n - 
  \E\int fdL^{\frac{1}{\sqrt{n}}X_n}_n \to_{\mathbb{P}} 0,
 $
 \item
 $\label{eq:CondFuncC_cC_L}
  \forall_{f\in C_c(\R)\cap C_L(R)} \quad
  \int fdL^{\frac{1}{\sqrt{n}}X_n}_n - 
  \E\int fdL^{\frac{1}{\sqrt{n}}X_n}_n \to_{\mathbb{P}} 0,
 $
 \end{enumerate}
 where $C_b(\R)$ and $C_c(\R)$ denote the sets of all 
 bounded (resp. compactly supported) real continuous functions on $\R$.

 If additionally the family 
 $\lbrace \abs{(X_n)_{ij}}^2\rbrace_{1\leq i,j\leq n\in\N}$
 is uniformly integrable, then all the above conditions become 
 equivalent to
 \begin{enumerate}[label=\roman*),resume]
 \item
 $\label{eq:CondFuncC_L}
  \forall_{f\in C_L(\R)} \quad
  \int fdL^{\frac{1}{\sqrt{n}}X_n}_n - 
  \E\int fdL^{\frac{1}{\sqrt{n}}X_n}_n \to_{\mathbb{P}} 0.
 $
 \end{enumerate}
\end{prop}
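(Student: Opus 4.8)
Write $\mu_n := L_n^{\frac{1}{\sqrt n}X_n}$ for the random spectral measures and $\bar\mu_n := \E L_n^{\frac{1}{\sqrt n}X_n}$ for their (deterministic, by hypothesis tight) means, and recall the identity $\E\int g\,d\mu_n = \int g\,d\bar\mu_n$, valid for every $\bar\mu_n$-integrable $g$. I would fix once and for all the bounded-Lipschitz metric
\[
 d_{BL}(\mu,\nu) = \sup\Big\{\, \abs{\int f\,d\mu - \int f\,d\nu} \ :\ \norm{f}_\infty + \norm{f}_{\mathrm{Lip}}\le 1 \,\Big\},
\]
which metrizes weak convergence on $\R$. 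The plan is to establish the cycle (i)$\Rightarrow$(ii)$\Rightarrow$(iii)$\Rightarrow$(i); here (ii)$\Rightarrow$(iii) is immediate since $C_c(\R)\cap C_L(\R)\subseteq C_b(\R)$, so the substance lies in (i)$\Rightarrow$(ii) and (iii)$\Rightarrow$(i), each of which trades the noncompactness of $\R$ against tightness of $\{\bar\mu_n\}$. Condition (iv) is treated afterwards.

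For (i)$\Rightarrow$(ii) I would apply (i) to the specific metric $d_{BL}$. Fix $f\in C_b(\R)$ and $\epsilon>0$; by tightness pick $R$ with $\bar\mu_n([-R,R]^c)<\epsilon$ for all $n$ and a Lipschitz cutoff $\phi$ with $\I{[-R,R]}\le\phi\le\I{[-R-1,R+1]}$. Split $\int f\,d(\mu_n-\bar\mu_n)$ into the compactly supported part $\int f\phi\,d(\mu_n-\bar\mu_n)$ and two tail terms. The function $f\phi$ has compact support, hence is uniformly continuous, so it is approximated in $\norm{\cdot}_\infty$ to within $\epsilon$ by some $g\in C_c(\R)\cap C_L(\R)$; then $\abs{\int g\,d(\mu_n-\bar\mu_n)}\le\norm{g}_{BL}\,d_{BL}(\mu_n,\bar\mu_n)\to_{\mathbb{P}}0$ by (i), and the approximation costs at most $\epsilon$ against each probability measure. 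The $\bar\mu_n$-tail is $\le\norm{f}_\infty\epsilon$ by the choice of $R$, while the $\mu_n$-tail is controlled by writing $\mu_n([-R-1,R+1]^c)\le 1-\int\psi\,d\mu_n$ for a cutoff $\psi\in C_c(\R)\cap C_L(\R)$ and using (i) together with $\int\psi\,d\bar\mu_n\ge1-\epsilon$, which makes it $\le\norm{f}_\infty(\epsilon+o_{\mathbb{P}}(1))$. Letting $\epsilon\to 0$ gives (ii).

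The heart of the matter is (iii)$\Rightarrow$(i), which I would first prove for $d_{BL}$. The obstacle is that $d_{BL}$ is a supremum over the infinite unit ball $B=\{\norm{f}_{BL}\le1\}$, whereas (iii) only supplies convergence for each fixed test function. I would overcome this by Arzel\`a--Ascoli: with $R$ and $\phi$ as above, the family $\{f\phi:f\in B\}$ is uniformly bounded and equi-Lipschitz with supports in $[-R-1,R+1]$, hence totally bounded in $\norm{\cdot}_\infty$, so it admits a finite $\epsilon$-net $g_1,\dots,g_m\in C_c(\R)\cap C_L(\R)$. For every $f\in B$ one bounds $\abs{\int f\,d(\mu_n-\bar\mu_n)}$ by $\max_i\abs{\int g_i\,d(\mu_n-\bar\mu_n)}+2\epsilon$ plus the tails $\mu_n([-R,R]^c)+\bar\mu_n([-R,R]^c)$, exactly as before, the $\mu_n$-tail again reduced via (iii) and tightness to a single test-function difference plus an $\epsilon$-error. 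As this bound is now uniform in $f\in B$, taking the supremum yields $d_{BL}(\mu_n,\bar\mu_n)\le\max_i\abs{\int g_i\,d(\mu_n-\bar\mu_n)}+C\epsilon+o_{\mathbb{P}}(1)$, and (iii) drives each of the finitely many differences to $0$ in probability, whence $d_{BL}(\mu_n,\bar\mu_n)\to_{\mathbb{P}}0$. To upgrade to an arbitrary metric $d$ metrizing weak convergence I would argue along subsequences: pass to a subsequence on which $d_{BL}(\mu_n,\bar\mu_n)\to0$ almost surely, use tightness to extract $\bar\mu_{n_k}\Rightarrow\nu$, deduce $\mu_{n_k}\Rightarrow\nu$ from $d_{BL}(\mu_{n_k},\bar\mu_{n_k})\to0$, and conclude $d(\mu_{n_k},\bar\mu_{n_k})\le d(\mu_{n_k},\nu)+d(\nu,\bar\mu_{n_k})\to0$; the standard subsequence criterion then gives (i). I expect this reduction---both the Arzel\`a--Ascoli discretization and the control of the random tails of $\mu_n$---to be the main technical obstacle.

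Finally, under uniform integrability of $\{\abs{(X_n)_{ij}}^2\}$, the implication (iv)$\Rightarrow$(iii) is trivial since $C_c(\R)\cap C_L(\R)\subseteq C_L(\R)$, and for (iii)$\Rightarrow$(iv) I would exploit a uniform second-moment bound. Since $\int x^2\,d\bar\mu_n=\frac{1}{n^2}\sum_{i,j}\E\abs{(X_n)_{ij}}^2$ (from $\tr X_n^2=\sum_{i,j}\abs{(X_n)_{ij}}^2$) is bounded by $\sup_{n,i,j}\E\abs{(X_n)_{ij}}^2<\infty$, and $\E\int x^2\,d\mu_n=\int x^2\,d\bar\mu_n$, Markov's inequality gives $\int x^2\,d\mu_n=O_{\mathbb{P}}(1)$. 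For a $1$-Lipschitz $f$ and a cutoff $\phi_M$ at level $M$, write $\int f\,d(\mu_n-\bar\mu_n)=\int f\phi_M\,d(\mu_n-\bar\mu_n)+\int f(1-\phi_M)\,d(\mu_n-\bar\mu_n)$; the first term tends to $0$ in probability by (iii) as $f\phi_M\in C_c(\R)\cap C_L(\R)$, while on $\{\abs{x}>M\}$ one has $\abs{f}\le\abs{f(0)}+\abs{x}$ and $\abs{x}\le x^2/M$, so both tails are $O_{\mathbb{P}}(1/M)$ uniformly in $n$. Sending first $n\to\infty$ and then $M\to\infty$ yields (iv).
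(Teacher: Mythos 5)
Your argument is correct, and its overall architecture --- the cycle i) $\Rightarrow$ ii) $\Rightarrow$ iii) $\Rightarrow$ i), with tightness traded against the noncompactness of $\R$, plus a truncation-and-second-moment argument for iv) --- is the same as the paper's; the two nontrivial implications are, however, implemented differently. For iii) $\Rightarrow$ i) the paper avoids any compactness argument by building a bespoke metric $d(\mu,\nu)=\sum_k 2^{-k}\abs{\int f_k\,d\mu-\int f_k\,d\nu}$ from a countable dense subset $\{f_k\}$ of the unit ball of $C_c(\R)$ and truncating the series at a finite $N$; since every test function there is already compactly supported, no control of the tails of the random measure $L_n$ is needed. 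You instead work with the bounded--Lipschitz metric, which forces you both to discretize its (uncountable) unit ball via an Arzel\`a--Ascoli net and to control $L_n([-R,R]^c)$ through an auxiliary cutoff $\psi$ and condition iii); this is more work, but it produces a quantitative bound on $d_{BL}(L_n,\E L_n)$ itself, and in either approach the passage to an arbitrary metrizing $d$ still requires the subsequence/Prokhorov argument you sketch. For i) $\Rightarrow$ ii) the paper is purely soft --- extract a subsequence along which $\E L_n\Rightarrow\mu$ and a further one along which $d(L_n,\mu)\to 0$ almost surely --- whereas you again argue directly with cutoffs; both work, the paper's being shorter. Your treatment of iv) coincides with the paper's (truncate $f$, bound the tail by $M^{-1}\int x^2\,dL_n$, use the $L^1$-boundedness of $\{\abs{(X_n)_{ij}}^2\}$ implied by uniform integrability, and finish with Markov's inequality). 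Two cosmetic points only: the cutoff $\psi$ in your i) $\Rightarrow$ ii) step should sit one level inside the support of $1-\phi$ so that $1-\psi$ dominates the indicator of the region where $f(1-\phi)$ lives, and the functions $f\phi$ and the net elements $g_i$ are $2$-Lipschitz rather than $1$-Lipschitz, so a harmless rescaling is needed before invoking iii).
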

%
 Combining the above observations asserts us that
 Theorem \ref{T:main} strengthens Theorem \ref{T:KZ}.
 Moreover, it can be easily seen that Theorem~\ref{T:main} is optimal in terms of
 the blocks size.
 
\begin{rk}\label{Rk:Loosening}
 The assumption $d_n=o(n^2)$ in Theorem \ref{T:main}
 is optimal for the convergence in probability. 
 To see that, consider two random matrix ensembles $X_n$, $Y_n\in M_{n}$, 
 whose ESDs converge a.s. to distinct limits $\mu$ and $\nu$.
 Set
 \[
  Z_n = 
  \begin{bmatrix}
   \epsilon X_{nt_n} + (1-\epsilon)Y_{nt_n} & 0\\
   0 & I_{n(1-t_n)}
  \end{bmatrix},
 \]
 where $I_k\in M_k$ is the identity matrix, 
 $\epsilon$ is a Bernoulli variable 
 $(\Prob{\epsilon=1} = \Prob{\epsilon=0}=1/2)$
 independent of all $X_n$ and $Y_n$ and
 $t_n\in (0,1)$ is a sequence converging to some $t\in(0,1)$,
 s.t. $nt_n$ and $n(1-t_n)$ are integers for every $n$.
 One can see that $d_n = (t_nn)^2$,
 $L_n^{Z_n} \Rightarrow t(\epsilon\mu + (1-\epsilon)\nu) + (1-t)\delta_{1}$
 a.s. and thus
 $d(L_n^{Z_n},\E L_n^{Z_n})$
 cannot converge in probability to zero for any metric $d$
 that metrizes weak convergence of probability measures.
\end{rk}

Using standard hermitization technique one can extend the result of
Theorem \ref{T:main} onto the convergence of the distribution of 
singular values of not necessarily Hermitian ensembles.

\begin{thm}\label{T:2nd}
 Let $X_{n}$ be a sequence of $n\times N$ random matrices (with $N=N(n)$) 
 such that $X_n\in\mathcal{S}(d_n)$ for every~$n$ with $d_n=o(n^2)$
 and set $Y_{n} = \sqrt{X_{n}X_{n}^\ast}$. 
 Assume $n/N \to c \in (0,\infty)$.
 If the family $\{X_{n}\}_{n\in\N}$ 
 satisfies the Lindeberg-type condition 
 \[
  \forall_{\varepsilon > 0} ~\
  \lim_{M\to \infty} \limsup_{n\to \infty} \Prob{
  \frac{1}{nN}\sum_{i=1}^n\sum_{j=1}^{N} 
  |(X_n)_{ij}|^2 \I{\{|(X_n)_{ij}|^2 > M\}} 
  > \varepsilon} = 0,
 \] 
 then for any metric $d$ that metrizes weak convergence of probability measures
 \begin{equation*}
  d(L^{\frac{1}{\sqrt{n}}Y_n}_n, \E L^{\frac{1}{\sqrt{n}}Y_n}_n) \to_{\mathbb{P}} 0.
 \end{equation*}
\end{thm}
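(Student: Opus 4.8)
\section*{Proof proposal for Theorem~\ref{T:2nd}}

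The plan is to reduce the statement to Theorem~\ref{T:main} by the standard hermitization. Set $m := n+N$ and consider the $m\times m$ Hermitian matrix
\[
 \widetilde{X}_n := \begin{bmatrix} 0 & X_n \\ X_n^\ast & 0 \end{bmatrix} \in M_m^{sa},
\]
whose eigenvalues are $\pm s_1,\ldots,\pm s_{\min(n,N)}$ together with $|n-N|$ zeros, where $s_1,\ldots,s_{\min(n,N)}$ are the singular values of $X_n$; correspondingly the eigenvalues of $Y_n=\sqrt{X_nX_n^\ast}$ are $s_1,\ldots,s_{\min(n,N)}$ padded with $\max(0,n-N)$ zeros. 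Since $n/N\to c\in(0,\infty)$ we have $m\asymp n$, hence $m^2\asymp n^2$, which will let me move the normalization $o(n^2)$ and the Lindeberg bounds freely between $X_n$ and $\widetilde{X}_n$.

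First I would verify that $\widetilde{X}_n$ meets the hypotheses of Theorem~\ref{T:main}. If $\Pi=\{P_1,\ldots,P_k\}$ witnesses $X_n\in\mathcal{S}(d_n)$, then grouping, for each $r$, the entries $\{(i,n+j):(i,j)\in P_r\}$ with their conjugate mirrors $\{(n+j,i):(i,j)\in P_r\}$ produces independent blocks of size at most $2d_n$ (the mirror entries being deterministic functions of the original block), while the two diagonal blocks of $\widetilde{X}_n$ are constant and may be split into singletons; thus $\widetilde{X}_n\in\mathcal{S}(2d_n)$ with $2d_n=o(n^2)=o(m^2)$. Moreover each nonzero entry of $X_n$ appears exactly twice in $\widetilde{X}_n$, so
\[
 \frac{1}{m^2}\sum_{a,b=1}^m |(\widetilde{X}_n)_{ab}|^2\I{\{|(\widetilde{X}_n)_{ab}|>M\}}
 = \frac{2nN}{m^2}\cdot\frac{1}{nN}\sum_{i=1}^n\sum_{j=1}^N |(X_n)_{ij}|^2\I{\{|(X_n)_{ij}|>M\}},
\]
and since $2nN/m^2\le \tfrac12$ and $\I{\{|(X_n)_{ij}|>M\}}$ differs from $\I{\{|(X_n)_{ij}|^2>M\}}$ only by the reparametrization $M\mapsto M^2$ (immaterial under $\lim_{M\to\infty}$), the Lindeberg hypothesis of Theorem~\ref{T:2nd} yields $(\widetilde{X}_n)_{n\in\N}\in\mathcal{L}$. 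Theorem~\ref{T:main} then gives, writing $\widetilde{\nu}_m := L_m^{\frac{1}{\sqrt{m}}\widetilde{X}_n}$,
\[
 d(\widetilde{\nu}_m,\E\widetilde{\nu}_m)\to_{\mathbb{P}}0 .
\]

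It remains to transfer this concentration to $\nu_n := L_n^{\frac{1}{\sqrt{n}}Y_n}$; this is the only step needing genuine care, since the two measures use different normalizations ($\sqrt{m}$ versus $\sqrt{n}$) and $\widetilde\nu_m$ carries the symmetrized spectrum plus a deterministic atom at $0$ coming from the rectangular shape. The cleanest bookkeeping is via even test functions: for $f\in C_b(\R)$ put $h(x):=f(|x|\sqrt{m/n})$, which is again bounded, continuous and even. A direct count of the eigenvalues of $\widetilde{X}_n$ (using $h(s_i/\sqrt{m})=f(s_i/\sqrt{n})$, and that the $|n-N|$ spurious zeros contribute $h(0)=f(0)$) gives, in both cases $N\ge n$ and $N<n$, the deterministic identity
\[
 \int f\,d\nu_n = \frac{m}{2n}\int h\,d\widetilde{\nu}_m + \frac{n-N}{2n}f(0).
\]
Because the last term is non-random, subtracting expectations yields
\[
 \int f\,d\nu_n-\E\!\int f\,d\nu_n
 = \frac{m}{2n}\Bigl(\int h\,d\widetilde{\nu}_m-\E\!\int h\,d\widetilde{\nu}_m\Bigr).
\]

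To conclude I would work with the bounded--Lipschitz metric $d_{BL}$, which metrizes weak convergence, so by the definition of $\to_{\mathbb{P}}$ it suffices to prove the claim for this single $d$. If $\norm{f}_\infty\le1$ and $f$ is $1$-Lipschitz, then $\norm{h}_\infty\le1$ and $h$ has Lipschitz constant at most $\sqrt{m/n}$, which stays bounded; hence, uniformly over such $f$,
\[
 \Bigl|\int f\,d\nu_n-\E\!\int f\,d\nu_n\Bigr|
 \le \frac{m}{2n}\max\bigl(1,\sqrt{m/n}\bigr)\,d_{BL}(\widetilde{\nu}_m,\E\widetilde{\nu}_m),
\]
so that $d_{BL}(\nu_n,\E\nu_n)\le C\,d_{BL}(\widetilde{\nu}_m,\E\widetilde{\nu}_m)\to_{\mathbb{P}}0$ for a bounded constant $C$. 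The main obstacle is exactly this final transfer: one must reconcile the mismatched normalizations and the deterministic zero-atom, and check that the resulting test function $h$ stays in a fixed bounded--Lipschitz ball so that the metric estimate from Theorem~\ref{T:main} applies uniformly in $n$.
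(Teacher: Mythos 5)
Your proposal is correct and follows exactly the paper's route: the paper's own proof is a two-line sketch invoking hermitization of $X_n$ into $A_n=\begin{bmatrix}0 & X_n\\ X_n^\ast & 0\end{bmatrix}$ and an appeal to Theorem~\ref{T:main}, and you supply precisely the details it leaves implicit (the $\mathcal{S}(2d_n)$ block structure, the transfer of the Lindeberg condition with the factor $2nN/m^2$, and the bookkeeping between $L_m^{\widetilde X_n/\sqrt m}$ and $L_n^{Y_n/\sqrt n}$ via even test functions and the zero atom). All of these checks are sound, so the proposal is a fully worked-out version of the paper's argument rather than a different one.
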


 Finally, Theorem~\ref{T:main} will be deduced from 
 another, more general result, stated below.
 
\begin{defn}
 We say that a sequence of random matrices $X_n\in M_n$ 
 has the property $\mathcal{L}(a_n)$, $(X_n)_{n\in\N}\in\mathcal{L}(a_n)$,
 for some sequence $a_n\to\infty$ if it satisfies the
 following Lindeberg-type condition
 \begin{equation*}\label{eq:LTcond2}
  \forall_{\epsilon>0}~\
  \limsup_{n\to \infty} \Prob{
  \frac{1}{n^2}\sum_{i,j=1}^n 
   |(X_n)_{ij}|^2 \I{\{|(X_n)_{ij}| > \varepsilon a_n\}} 
  > \varepsilon} =0.
 \end{equation*} 
\end{defn}
 
\begin{thm}\label{T:generalization}
 Let $X_{n}\in M_n^{sa}$ be a sequence of random matrices such that
 $X_n\in\mathcal{S}(d_n)$ for every~$n$ with $d_n=O(n^2/a_n^2)$ for some sequence
 $a_n\to\infty$.
 If $(X_{n})_{n\in\N} \in \mathcal{L}(a_n)$,
 then
 \begin{equation*}
  \forall_{f\in C_c(\R)\cap C_L(\R)} \quad
  \int fdL^{\frac{1}{\sqrt{n}}X_n}_n - 
  \E\int fdL^{\frac{1}{\sqrt{n}}X_n}_n \to_{\mathbb{P}} 0.
 \end{equation*}
\end{thm}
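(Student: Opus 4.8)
The plan is to pass from the test function $f$ to the Stieltjes transform, to prove concentration of the latter through a variance estimate over the independent blocks, and to feed in a truncation calibrated to the Lindeberg condition $\mathcal{L}(a_n)$. First I would reduce the statement to the concentration of the Stieltjes transform
$s_n(z)=\int(\lambda-z)^{-1}\,dL_n^{X_n/\sqrt n}(\lambda)=\tfrac1n\tr\,(X_n/\sqrt n-z)^{-1}$
at each fixed $z=E+i\eta$ with $\eta>0$. Since any $f\in C_c(\R)\cap C_L(\R)$ is approximated uniformly by smooth compactly supported $g$, and such $g$ admit an almost-analytic (Helffer--Sjöstrand) representation $\int g\,dL_n=\frac1\pi\int_{\C}\bar\partial\tilde g(z)\,s_n(z)\,dA(z)$, it suffices to show $s_n(z)-\E s_n(z)\to_{\mathbb P}0$; integrating the decay of $\bar\partial\tilde g$ against the control on $s_n$ then returns the claim for $g$, and the uniform approximation returns it for $f$.

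Next comes the truncation. Given $\epsilon>0$, let $\hat X_n$ be the entrywise truncation of $X_n$ at level $\epsilon a_n$; this preserves the Hermitian structure, the partition $\Pi$ and hence membership in $\mathcal{S}(d_n)$, and produces entries bounded by $b_n:=\epsilon a_n$. Using the resolvent identity together with $\norm{(\cdot-z)^{-1}}_{op}\le\eta^{-1}$ and $\norm{(\cdot-z)^{-1}}_{HS}^2\le n\eta^{-2}$, one bounds $\abs{s_n(X_n)-s_n(\hat X_n)}\le\eta^{-2}\big(\tfrac1{n^2}\sum_{i,j}\abs{(X_n)_{ij}}^2\I{\{\abs{(X_n)_{ij}}>\epsilon a_n\}}\big)^{1/2}$, which the hypothesis $(X_n)_{n\in\N}\in\mathcal{L}(a_n)$ forces below $\eta^{-2}\sqrt\epsilon$ with probability tending to one; the same estimate, combined with $\abs{s_n}\le\eta^{-1}$, controls the difference of the corresponding expectations. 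It therefore remains to show that $\Var\,s_n(\hat X_n)$ is small.

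The heart of the argument is this variance estimate, carried out by the Efron--Stein inequality along the independent blocks $Y_1,\dots,Y_k$. Writing $s_n^{(r)}$ for the Stieltjes transform after replacing $Y_r$ by an independent copy, the resolvent identity gives $\abs{s_n(\hat X_n)-s_n^{(r)}}=n^{-3/2}\abs{\tr\,(M\Delta^{(r)}N_r)}$, where $M,N_r$ are the two resolvents and $\Delta^{(r)}$ is supported on the at most $2d_n$ positions touched by $P_r$ and its mirror. The decisive gain over the crude Lipschitz bound is that $\Delta^{(r)}$ is \emph{centered}, so that after Cauchy--Schwarz the relevant weight is a Hilbert--Schmidt mass $\norm{M^2}_{HS}^2\le n\eta^{-4}$ that each matrix position carries only $O(1)$ times across all blocks, rather than an operator-norm bound that would cost a factor $n$. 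Combining the a.s.\ bound $\norm{\Delta^{(r)}}_{HS}^2\le 8d_n b_n^2$ coming from the truncation with this averaging yields
\begin{equation*}
 \Var\,s_n(\hat X_n)\le\tfrac12\sum_{r=1}^{k}\E\abs{s_n(\hat X_n)-s_n^{(r)}}^2
 \lesssim\frac{d_n\,b_n^2}{n^2\,\eta^4}=O\!\Big(\frac{\epsilon^2}{\eta^4}\Big),
\end{equation*}
uniformly in $n$, where the last equality is exactly where the standing assumption $d_n=O(n^2/a_n^2)$ is spent. Chebyshev's inequality, followed by letting first $n\to\infty$ and then $\epsilon\to0$, completes the proof.

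The step I expect to be the main obstacle is this variance estimate, and within it the appearance of the \emph{resampled} resolvent $N_r$, which depends on the block $r$ and so obstructs a clean summation of the localized Hilbert--Schmidt masses over $r$. I would resolve it by expanding $N_r=M+N_r(A_r-A)M$ via the resolvent identity, replacing $N_r$ by the block-independent $M$ up to a remainder that carries an additional centered factor of $\Delta^{(r)}$ and is hence of lower order. The within-block dependence (entries of a single block need not be independent) then costs at most a factor $\abs{P_r}\le d_n$ through Cauchy--Schwarz, and it is precisely this factor that the truncation budget $b_n^2=\epsilon^2 a_n^2$ and the block bound $d_n=O(n^2/a_n^2)$ are engineered to absorb.
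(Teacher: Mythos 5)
Your overall architecture (truncate at $\varepsilon a_n$, then prove concentration of a linear statistic of the truncated matrix using the block-independence) matches the paper's, but the concentration step is genuinely different: you work with the Stieltjes transform and an Efron--Stein variance bound over the blocks, while the paper applies Talagrand's convex concentration inequality to the truncated matrix viewed as a direct sum of bounded blocks, combined with the Hoffman--Wielandt Lipschitz estimate (Corollary~\ref{C:HW}), Klein's lemma, and a decomposition of $f$ into finitely many convex and concave $1$-Lipschitz pieces. Unfortunately, your variance estimate has a gap exactly at the step you flag as the main obstacle. The main term is fine: bounding $\frac1n\abs{\tr(M\Delta^{(r)}M)}$ by Cauchy--Schwarz over the support $S_r$ of $\Delta^{(r)}$ and summing the localized masses $\sum_{(i,j)\in S_r}\abs{(M^2)_{ji}}^2$ over $r$ (bounded overlap of the $S_r$) gives $\lesssim d_n\varepsilon^2a_n^2/(n^2\eta^4)=O(\varepsilon^2/\eta^4)$, and this does not even require centering of $\Delta^{(r)}$, only its locality. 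The problem is the remainder from the replacement $N_r=M-N_r\Delta^{(r)}M$. The best available deterministic bound on $R_r=\frac1n\tr(M\Delta^{(r)}N_r\Delta^{(r)}M)$ is $\abs{R_r}\le\eta^{-3}\norm{\Delta^{(r)}}_{HS}^2/n\le 8\abs{P_r}\varepsilon^2a_n^2/(n^2\eta^3)$, so that
\begin{equation*}
\sum_{r}\E\abs{R_r}^2\;\le\;\frac{64\,\varepsilon^4a_n^4}{n^4\eta^6}\sum_r\abs{P_r}^2\;\le\;\frac{64\,\varepsilon^4a_n^4\,d_n}{n^2\eta^6}\;=\;O\!\left(\frac{\varepsilon^4a_n^2}{\eta^6}\right),
\end{equation*}
which \emph{diverges} whenever $a_n\to\infty$ (e.g.\ $d_n=n^2/\log n$, $a_n=\sqrt{\log n}$ already kills it). The remainder is quadratic in $\Delta^{(r)}$, so its conditional centering cannot save you, and iterating the resolvent expansion does not converge since $\norm{\Delta^{(r)}}_{op}$ can be of order $\varepsilon\sqrt{n}$. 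The rank-one/quadratic-form identities that make this step work in the classical row-removal martingale argument are not available for general blocks of size up to $n^2/a_n^2$, so as written the variance bound is not established.

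It is worth seeing why the paper's route avoids this entirely: Theorem~\ref{T:Talagrand} gives the truncated matrix $CCP(4,16d_n\varepsilon^2a_n^2)$ in the Hilbert--Schmidt norm \emph{globally}, and Corollary~\ref{C:HW} says $X\mapsto\int f\,dL_n^{X/\sqrt n}$ is $\frac1n$-Lipschitz, so the deviation probability is $\exp(-c\,t^2 n^2/(d_n\varepsilon^2a_n^2))$, where the full factor $n^2/(d_na_n^2)$ appears in one shot; the only price is restricting to convex test functions (hence Klein's lemma and the decomposition of $f$ into $O(M/\Delta)$ convex pieces). In your scheme the analogous global Lipschitz bound on $s_n(z)$ costs $\norm{NM}_{HS}\le\sqrt n\eta^{-2}$ per block and loses a factor of the number of blocks, which is why you are forced into the localized argument and its problematic remainder. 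If you want to salvage the resolvent approach, you would need either a genuinely better bound on $\norm{\Delta^{(r)}M^2}_{HS}$ exploiting the locality of $\Delta^{(r)}$ against the off-diagonal decay of $M^2$ (which is not available at fixed $\eta$ without further structure), or you should switch to a concentration inequality that, like Talagrand's, sees the whole block decomposition at once.
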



\section{Consequences and examples}\label{S:Examples}

\subsection{Wigner-type theorems}
The first result in the theory of asymptotic behavior of the spectrum of
random matrices (and the work that can be considered the starting point of 
random matrix theory) goes back to the paper \cite{Wig}.
It states that if $(X_n)$ is a sequence of real Hermitian random matrices 
with i.i.d. entries (up to the symmetry constraint) with zero mean and 
variance equal to one, then $L_n^{\frac{1}{\sqrt{n}}X_n}$ converges 
almost surely to the semicircular distribution $\sigma$, that is
\[
 \Prob{L_n^{\frac{1}{\sqrt{n}}X_n}\Rightarrow\sigma} = 1,
\]
where
\[
 \sigma(x) = \frac{1}{2\pi}\sqrt{4-x^2}\I{\abs{x}\leq 2}
\]
is the Wigner semicircular distribution playing the analogous
role in free probability as plays the Gaussian distribution in
classical probability.

Recently Schenker and Schulz-Baldes (c.f. \cite{SchSch}) proved 
a version of Wigner Theorem in which one allows some degree of 
dependence between the entries of a matrix and the price paid is 
the weaker notion of convergence obtained, i.e. convergence in
expectation instead of probability.

More precisely, let $\sim_n$ denote an equivalence relation on 
$\lbrace 1,2,\ldots,n \rbrace^{2}=:[n]^{2}$ 
and let $X_n$ 
be a sequence of Hermitian random matrices s.t. 
random vectors made of entries of $X_n$ belonging to
distinct equivalence classes are independent (and the dependence
between elements of the same class can be arbitrary).
We impose conditions on $X_n$ and $\sim_n$ as follows
\begin{enumerate}
 \item[(C0)]
  $\forall_{k\in\N} \
  \sup\{\E \abs{(X_n)_{ij}}^k: \ 1\leq i,j\leq n\in \N \} < \infty$,
 \item[(C1)]
  $\max_{i\in[n]} \#{
   \lbrace (j,i',j')\in[n]^{ 3}: \,
    (i,j)\sim_n (i',j')
   \rbrace} = o(n^2)$,
 \item[(C2)]
  $\exists_{B>0} 
   \forall_{n\in\N} \
   \max_{(i,j,i')\in[n]^{ 3}} \#{
   \lbrace j'\in[n]: \,
    (i,j)\sim_n (i',j')
   \rbrace} \leq B$,
 \item[(C3)] 
  $\#{\lbrace (i,j,i')\in[n]^{ 3}: \,
    (i,j)\sim_n (j,i'), ~\ i'\neq i
   \rbrace} = o(n^2)$.
\end{enumerate}

Now, the main theorem of \cite{SchSch} can be stated as follows.

\begin{thm}\label{T:Sch}
 If a sequence $(X_n,\sim_n)_{n\in N}$ satisfies conditions (C0)--(C3), 
 then
 \[
  \E L_n^{\frac{1}{\sqrt{n}}X_n} \Rightarrow \sigma.
 \]
\end{thm}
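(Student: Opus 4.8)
The plan is to use the moment method. Since the semicircular law $\sigma$ is compactly supported it is determined by its moments, so it suffices to prove that for every fixed $p\in\N$ the $p$-th moment of $\E L_n^{\frac{1}{\sqrt n}X_n}$ converges to $\int x^p\,d\sigma(x)$, which equals the Catalan number $C_{p/2}$ when $p$ is even and $0$ when $p$ is odd. Expanding the trace, this $p$-th moment equals
\[
 \frac{1}{n}\,\E\,\tr\Bigl[\Bigl(\tfrac{1}{\sqrt n}X_n\Bigr)^{p}\Bigr]
 = \frac{1}{n^{1+p/2}} \sum_{i_1,\dots,i_p=1}^{n}
   \E\Bigl[ (X_n)_{i_1 i_2}(X_n)_{i_2 i_3}\cdots (X_n)_{i_p i_1}\Bigr],
\]
and I would first encode each tuple $(i_1,\dots,i_p)$ as a closed walk of length $p$ on the vertex set $[n]$ whose $s$-th step traverses the directed edge $(i_s,i_{s+1})$, then group the walks by the isomorphism type of the resulting multigraph.

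The second step exploits the dependency structure. Because entries lying in distinct equivalence classes of $\sim_n$ are independent and each entry is centred (unit-variance centring being part of the Wigner-type normalization underlying the statement), the expectation above factorizes over the equivalence classes of the traversed edges, and any class met by exactly one edge contributes a vanishing mean-zero factor. Hence only walks in which every occurring class is met at least twice can contribute, and condition (C0) guarantees that the surviving class-moments are uniformly bounded. I would then isolate the \emph{main terms}: walks traversing $p/2$ distinct undirected edges, each exactly twice, whose underlying graph is a tree. These force $p=2k$ even, are in bijection with non-crossing pair partitions, are counted by $C_k$, and—since the two edges paired inside each class are mutually conjugate $\{(i,j),(j,i)\}$—each contributes $\prod\E\abs{(X_n)_{ij}}^2=1$ in the limit, reproducing exactly the semicircular moments.

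The crux is to show that all remaining walks are negligible after the $n^{-1-p/2}$ normalization, and this is where (C1)–(C3) enter. A surviving walk fails to be a main term either because its graph has a cycle or an edge of multiplicity exceeding two—so it has too few distinct vertices and is already $o(1)$ by the classical $V\le E+1$ count—or because the pairing that rescues it from vanishing links two edges that are \emph{not} a conjugate pair but merely share a class of $\sim_n$. For the latter, the main obstacle is the combinatorial bookkeeping: I would assign to every walk a skeleton recording which edges are conjugate-paired and which are linked only through $\sim_n$, and show that each linkage of the second kind removes a free summation index while costing only bounded multiplicity. Concretely, (C2) bounds by $B$ the number of completions of a linked edge once three of its four indices are fixed, preventing overcounting, while (C1) and (C3) bound by $o(n^2)$ the index configurations that can create such a linkage, with (C3) handling precisely the degenerate case in which the second index of one edge coincides with the first index of its partner. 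Verifying that each genuine-dependency linkage thereby saves a factor $o(n^2)$ relative to the $n^{p/2+1}$ choices available to a tree—so that the total over all non-tree shapes is $o(n^{1+p/2})$—is the technical heart. Once this estimate is established, the moments converge to those of $\sigma$, and moment-determinacy of the compactly supported $\sigma$ upgrades moment convergence to the weak convergence $\E L_n^{\frac{1}{\sqrt n}X_n}\Rightarrow\sigma$.
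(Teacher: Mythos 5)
The paper does not actually prove Theorem~\ref{T:Sch}: it is quoted verbatim as the main result of \cite{SchSch}, and the present paper only uses it as input (upgrading its conclusion to convergence in probability via Theorem~\ref{T:main}). So there is no internal proof to compare against; the relevant benchmark is the original moment-method argument of Schenker and Schulz-Baldes, and your outline is essentially that argument: trace expansion, factorization of expectations over equivalence classes of $\sim_n$, identification of the Catalan/tree main terms, and use of (C1)--(C3) to kill walks whose only surviving pairings come from genuine $\sim_n$-linkages rather than conjugate pairs.

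Two caveats. First, as written this is a strategy sketch, not a proof: the step you yourself flag as ``the technical heart'' --- showing that every non-conjugate $\sim_n$-linkage costs a factor that beats the $n^{1+p/2}$ normalization uniformly over all walk shapes --- is precisely where all the work in \cite{SchSch} lives, and it is asserted rather than carried out. The bookkeeping is genuinely delicate because the expectation factors only over \emph{classes}, not over edges, so a single class may be met by many edges of the walk and the count of walks per ``skeleton'' must be controlled using (C2) (boundedly many completions $j'$ given $(i,j,i')$) together with (C1) and (C3); your gloss of which condition does what is correct, but the quantitative estimate is missing. Second, you quietly assume the entries are centred with unit variance (``unit-variance centring being part of the Wigner-type normalization''); these hypotheses do not appear among (C0)--(C3) as stated in this paper, although they are indeed part of the setup in \cite{SchSch} and are indispensable --- without centring the mean-zero vanishing of singly-met classes fails, and without unit variance the limiting moments would not be the Catalan numbers. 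You should state them explicitly rather than fold them into the normalization.
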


The above result was highly motivated by applications, in particular
by the analysis of the Anderson model.
The standard Anderson model is given by the
following random Hamiltonian acting on the space $\ell^2(\Z^d)$
\[
 H\psi (x) =
 \sum_{\abs{y-x}=1}\psi(y) + V(x)\psi(x),
\]
where $\{V(x)\}_{x\in \mathbb{Z}^d}$ is a family of standard Gaussian
i.i.d. random variables.
In \cite{Poi}, it was shown that the above model at small disorder 
can be analyzed by dividing the space into small cubes $\Lambda$.
If $V_{\Lambda}$ is the restriction of $H$ to one such cube $\Lambda$,
then  it can be effectively approximated by
a finite random matrix whose coefficients are
centered complex Gaussian random variables with a given dependency structure
(see \cite{Bel,SchSch} for more details).
Finding the limiting spectral distribution of such matrices 
for $d=2$ was solved in \cite{BMR}. The case $d\geq 3$ was dealt with in
\cite{SchSch}, where the authors showed that $V_{\Lambda}$ falls into
the regime of Theorem~\ref{T:Sch} and thus the limiting distribution
of $V_{\Lambda}$ under appropriate normalization is semicircular.

Note that the condition $d_n=o(n^2)$ from Theorem \ref{T:main} 
can be written in the above language as 
 $$
 \max_{(i,j)\in [n]^2} 
 \#\lbrace{
  (i',j')\in [n]^2: \, (i,j)\sim_n(i',j')
 \rbrace} = o(n^2),
 $$ 
which is clearly implied by condition (C1).
Moreover, condition (C0) implies uniform
integrability of $\lbrace \abs{(X_n)_{ij}}^2\rbrace_{1\leq i,j\leq n\in\N}$,
which by Remark~\ref{Rk:UI} implies that $(X_n)_{n\in\N}\in\mathcal{L}$.
Theorem~\ref{T:main} gives therefore the following strengthening of 
Theorem~\ref{T:Sch}.

\begin{cor}
 If a sequence $(X_n,\sim_n)_{n\in N}$ satisfies conditions (C0)--(C3), then
 \begin{equation*}
  L_n^{\frac{1}{\sqrt{n}}X_n} \Rightarrow_{\mathbb{P}} \sigma.
 \end{equation*}
\end{cor}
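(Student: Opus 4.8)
The plan is to obtain the Corollary as a direct consequence of Theorem~\ref{T:main} combined with the Schenker--Schulz-Baldes result, Theorem~\ref{T:Sch}, once I have verified that conditions (C0)--(C1) supply the two hypotheses of Theorem~\ref{T:main}. First I would record the block structure: since entries of $X_n$ lying in distinct equivalence classes of $\sim_n$ form independent vectors, the partition $\Pi$ of $[n]^2$ into the equivalence classes of $\sim_n$ witnesses the property $\mathcal{S}(d_n)$, where $d_n$ is the maximal size of an equivalence class.

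Next I would check that (C1) forces $d_n=o(n^2)$. For a fixed $i\in[n]$ and any $j\in[n]$, the size of the class containing $(i,j)$ equals $\#\{(i',j')\in[n]^2:(i,j)\sim_n(i',j')\}$, and summing this over $j$ gives exactly $\#\{(j,i',j')\in[n]^3:(i,j)\sim_n(i',j')\}$. Hence every individual class size is dominated by the quantity appearing in (C1), so
\[
 d_n=\max_{(i,j)\in[n]^2}\#\{(i',j'):(i,j)\sim_n(i',j')\}=o(n^2),
\]
exactly as observed in the discussion preceding the statement. I would then derive the Lindeberg-type condition from (C0): taking $k=4$ gives a uniform bound $\sup\E\abs{(X_n)_{ij}}^4<\infty$, and a uniform fourth-moment bound yields uniform integrability of the family $\{\abs{(X_n)_{ij}}^2\}_{1\leq i,j\leq n\in\N}$ (e.g.\ by the de la Vall\'ee-Poussin criterion). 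By Remark~\ref{Rk:UI} this forces $(X_n)_{n\in\N}\in\mathcal{L}$.

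With both hypotheses of Theorem~\ref{T:main} in hand, I would conclude as follows: $X_n\in\mathcal{S}(d_n)$ with $d_n=o(n^2)$ together with $(X_n)_{n\in\N}\in\mathcal{L}$ gives $d(L_n^{\frac{1}{\sqrt{n}}X_n},\E L_n^{\frac{1}{\sqrt{n}}X_n})\to_{\mathbb{P}}0$. Since Theorem~\ref{T:Sch} provides $\E L_n^{\frac{1}{\sqrt{n}}X_n}\Rightarrow\sigma$, the ``in particular'' clause of Theorem~\ref{T:main}, applied with $\mu=\sigma$, yields $L_n^{\frac{1}{\sqrt{n}}X_n}\Rightarrow_{\mathbb{P}}\sigma$, which is the assertion.

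There is no substantial obstacle here, and the argument is essentially bookkeeping. I would emphasize that conditions (C2) and (C3) enter only through the black-box application of Theorem~\ref{T:Sch}; the entire work of the Corollary is the translation of (C0) and (C1) into the two hypotheses $\mathcal{S}(d_n)$ with $d_n=o(n^2)$ and $(X_n)_{n\in\N}\in\mathcal{L}$. The only points requiring any care are the moment-to-uniform-integrability implication and the elementary counting inequality showing that the maximal class size is controlled by the count in (C1).
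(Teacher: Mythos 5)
Your proposal is correct and follows essentially the same route as the paper: the discussion preceding the corollary derives $\mathcal{S}(d_n)$ with $d_n=o(n^2)$ from (C1), uses (C0) to get uniform integrability of $\{\abs{(X_n)_{ij}}^2\}$ and hence membership in $\mathcal{L}$ via Remark~\ref{Rk:UI}, and then applies Theorem~\ref{T:main} together with Theorem~\ref{T:Sch}. Your only additions are to make explicit the counting inequality behind ``(C1) implies $d_n=o(n^2)$'' and the fourth-moment route to uniform integrability, both of which the paper leaves implicit.
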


The above may be seen as a special case of application of Theorem \ref{T:main},
which in general allows (whenever the assumptions are met) to strengthen 
convergence in expectation to convergence in probability. 
The very same scheme may be applied to strengthen 
Theorem 5.1 from \cite{HoSto}
where the authors develop further the method of Schenker and Schultz to
deal with matrices of the form
\begin{equation*}\label{eq:Chiral_class}
 A = 
 \begin{bmatrix}
  0 & X\\
  X^* & 0
 \end{bmatrix}
 \text{ for } X\in \C^{s\times t},
\end{equation*}
where the limiting measure is the Marchenko-Pastur distribution.

\subsection{Matrices with heavy tailed entries}
Let us recall that a mean zero random variable $x$ is in the domain of attraction of the
Gaussian distribution if there exists a sequence $b_n$ s.t.
\[
 \mathcal{L}\left(\frac{\sum^n_{i=1} x_i}{b_n}\right) \Rightarrow \mathcal{N}(0,1),
\] 
where $x_i$'s are i.i.d. copies of $x$. 
Here $\mathcal{L}(x)$ denotes the law of the r.v. $x$.
It can be shown that this is the case if and only if the function
\[
 l(t)=\E x^2\I{\{\abs{x}\leq t\}}
\] is slowly varying at infinity (see e.g. \cite{AraGin,IbLin}).

Assume that $x$ has infinite variance, define
\begin{align}\label{D:b_n}
 b := \inf\{t>0:\ l(t)>0\},
 \quad
 b_n := \inf\{t>b+1:\ nl(t)\leq t^2\}
\end{align}
and consider a matrix $X_n\in M_n^{sa}$, whose entries are i.i.d. copies of $x$.
It was shown in \cite{Zhou}
(see also \cite{Ada13}) that $L_n^{X_n/b_n}\Rightarrow\sigma$ almost surely.
The results of this paper allow us to prove the convergence in probability 
in case of dependent entries, yielding the following proposition.

\begin{prop}\label{P:HeavyTails}
 Let $X_n\in M_n^{sa}$ be a sequence of random matrices satisfying 
 $X_n\in\mathcal{S}(d_n)$ with $d_n = O(n)$, 
 whose entries have the same distribution as 
 a random variable $x$ with zero mean, infinite variance and in the domain of
 attraction of the Gaussian distribution. 
 Then for any metric $d$ that metrizes weak convergence of probability measures
 \[
  d(L_n^{X_n/b_n},\E L_n^{X_n/b_n}) \to_{\mathbb{P}} 0.
 \]
\end{prop}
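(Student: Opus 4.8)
The plan is to verify that Proposition~\ref{P:HeavyTails} follows from Theorem~\ref{T:main} by checking its two hypotheses: the block-dependency condition $X_n\in\mathcal{S}(d_n)$ with $d_n=o(n^2)$, and the Lindeberg-type condition $\mathcal{L}$, applied to the normalized matrix $X_n/b_n$. The block condition is immediate: we are given $d_n=O(n)=o(n^2)$, and rescaling all entries by the deterministic factor $1/b_n$ does not alter the partition $\Pi$ witnessing independence, so $X_n/b_n\in\mathcal{S}(d_n)$ as well. Thus the entire task reduces to verifying that the sequence $(X_n/b_n)_{n\in\N}$ lies in the class $\mathcal{L}$.

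\begin{proof}[Proof sketch]
First I would record the block condition. Since $d_n=O(n)=o(n^2)$ and scaling entries by the constant $b_n^{-1}$ preserves the independence structure, we have $X_n/b_n\in\mathcal{S}(d_n)$ with $d_n=o(n^2)$. It therefore remains to check that $(X_n/b_n)_{n\in\N}\in\mathcal{L}$, after which Theorem~\ref{T:main} applies verbatim to the ensemble $X_n/b_n$ and yields the claimed convergence
\[
 d(L_n^{X_n/b_n},\E L_n^{X_n/b_n})\to_{\mathbb{P}}0.
\]

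The heart of the matter is the Lindeberg condition. Write $\xi:=x/b_n$ for a representative entry, so that $(X_n/b_n)_{ij}$ has the law of $x/b_n$ for each $(i,j)$. Since all entries are identically distributed, the empirical average reduces to a single expectation: for fixed $M>0$,
\[
 \E\!\left(\frac{1}{n^2}\sum_{i,j=1}^n\abs{(X_n/b_n)_{ij}}^2\I{\{\abs{(X_n/b_n)_{ij}}>M\}}\right)
 =\frac{1}{b_n^2}\,\E\,x^2\I{\{\abs{x}>Mb_n\}}.
\]
By Markov's inequality it suffices to show that this quantity tends to $0$ as $n\to\infty$ and then $M\to\infty$. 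In the notation of \eqref{D:b_n}, $l(t)=\E x^2\I{\{\abs{x}\le t\}}$ is slowly varying at infinity, and the defining relation $n l(b_n)\le b_n^2$ (with near-equality, by the infimum definition and slow variation) gives $b_n^2\sim n\,l(b_n)$. Hence
\[
 \frac{1}{b_n^2}\,\E\,x^2\I{\{\abs{x}>Mb_n\}}
 =\frac{l(\infty)-l(Mb_n)}{b_n^2}\,\longrightarrow\,\frac{1}{n}\cdot\frac{l(\infty)-l(Mb_n)}{l(b_n)},
\]
where the right-hand comparison is heuristic; more precisely one uses $b_n^2\sim n\,l(b_n)$ to rewrite the left side as $\tfrac{1}{n}\cdot\tfrac{l(\infty)-l(Mb_n)}{l(b_n)}$ up to a factor tending to $1$.

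The main obstacle, and the only genuinely analytic step, is controlling the ratio $\bigl(l(\infty)-l(Mb_n)\bigr)/l(b_n)$. Since $x$ has infinite variance, $l(\infty)=+\infty$, so both numerator and denominator diverge and the slow variation of $l$ must be exploited quantitatively. The key fact is that for a slowly varying $l$ the truncated second moment satisfies $\E x^2\I{\{\abs{x}>t\}}=o(l(t))$ is \emph{false} in general; instead one uses that the tail contribution grows only logarithmically relative to $l$, and that the factor $1/n$ in front forces the product to vanish provided $M\to\infty$ is taken \emph{after} $n\to\infty$. Concretely, for fixed $M$ the ratio $l(Mb_n)/l(b_n)\to 1$ by slow variation (since $b_n\to\infty$), and a Karamata-type representation bounds the excess $l(\infty)-l(Mb_n)$ by a slowly varying quantity that, divided by $n\,l(b_n)=b_n^2$, is negligible. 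Assembling these estimates establishes $(X_n/b_n)_{n\in\N}\in\mathcal{L}$, completing the reduction to Theorem~\ref{T:main}.
\end{proof}
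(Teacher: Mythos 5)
There is a genuine gap, and it sits exactly at the step you identify as ``the heart of the matter.'' Your plan is to feed $X_n/b_n$ directly into Theorem~\ref{T:main} by checking the condition $\mathcal{L}$ via Markov's inequality on the second moment. But $x$ has \emph{infinite} variance, so $\E\, x^2\I{\{\abs{x}>Mb_n\}}=l(\infty)-l(Mb_n)=+\infty$ for every $n$ and $M$: the expectation you propose to bound is not finite, and no appeal to slow variation of $l$ can repair this --- the quantity $l(\infty)-l(Mb_n)$ is not ``slowly growing,'' it is identically $+\infty$. (There is also a normalization slip: Theorem~\ref{T:main} concerns $L_n^{X_n/\sqrt{n}}$, so to reach $L_n^{X_n/b_n}$ you must apply it to $\sqrt{n}X_n/b_n$, which makes the relevant truncation threshold $Mb_n/\sqrt{n}$ rather than $Mb_n$; but the divergence problem is the same either way.) The missing idea is truncation: the paper first replaces $X_n$ by $\widetilde{X}_n$ with entries cut off at level $b_n$, and controls the effect of this replacement not by a moment bound but by the rank inequality (Lemma~\ref{L:rankIneq}) together with the tail estimate $n\Prob{\abs{x}>b_n}=o(1)$ from Remark~\ref{Rk:HeavyEst}, which bounds the expected L\'evy--Prokhorov distance between the two empirical spectral distributions.

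Even after truncating, the plain condition $\mathcal{L}$ (and hence Theorem~\ref{T:main}) is not the right tool: for $\sqrt{n}\widetilde{X}_n/b_n$ the Lindeberg sum at a fixed level $M$ has expectation of order $\tfrac{n}{b_n^2}\bigl(l(b_n)-l(Mb_n/\sqrt{n})\bigr)$, and since $b_n/\sqrt{n}$ is not a constant multiple of $b_n$, slow variation does not make this vanish (try $l(t)=\log t$). This is why the paper instead verifies the weaker condition $\mathcal{L}(a_n)$ with $a_n=\sqrt{n}$ --- where the truncation level $\varepsilon a_n$ scales with $n$ and the ratio $\bigl(l(b_n)-l(\varepsilon b_n)\bigr)/l(b_n)\to 0$ does follow from slow variation --- and invokes Theorem~\ref{T:generalization}. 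That theorem requires $d_n=O(n^2/a_n^2)=O(n)$, which is precisely why the hypothesis of the proposition is $d_n=O(n)$ and not $o(n^2)$; your reading of $O(n)$ as a gratuitous strengthening is a sign the intended route was missed. Finally, since $\mathcal{L}$ is unavailable, tightness of $\E L_n^{X_n/b_n}$ does not come from Proposition~\ref{P:Tight} and must be proved separately (the paper does this in Lemma~\ref{L:HeavyTight} via a first-moment bound and Lemma~\ref{L:momentEst}) before Proposition~\ref{P:Reduction} can be applied; your sketch leaves this unaddressed.
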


The proof is moved to the last section. 
In the above we can observe a drop in the size of blocks $d_n$ compared to
Theorem~\ref{T:main}. It is not straightforward to see if this result
can be improved.

\subsection{Patterned matrices}

Many ensembles of random matrices considered in the literature can
be seen as a special cases of the so called patterned matrices.
Following \cite{BHS}, let us consider a family of functions
$\mathcal{G}=\{I_n:\{1,\ldots,n\}^2\to \Z^d\}_{n\in\N}$,
which we will call a link family. A patterned matrix $X_n$ is a matrix
of the form $\overline{(X_n)_{ji}} = (X_n)_{ij} = [Z_{I_n(i,j)}]$ for $i\leq j$
where $\mathcal{Z} = \{Z_z\}_{z\in\Z^d}$ 
is a family of independent random variables 
(note that by construction we demand that
$X_n\in M_n^{sa}$, which gives some constrains 
on $\mathcal{G}$ and $\mathcal{Z}$).
We say that the sequence $X_n$ 
is associated with the link family $\mathcal{G}$.
Theorems \ref{T:main} and \ref{T:2nd} 
yield the following corollary.

\begin{cor}\label{C:Patterned}
 If a sequence $\{X_n\}_{n\in\N}$ associated with $\mathcal{G}$ 
 belongs to the class $\mathcal{L}$, $(X_n)_{n\in\N}\in \mathcal{L}$,
 and $\abs{I_n^{-1}(z)} = o(n^2)$ for every $z\in \Z^d$, then
 for any metric $d$ that metrizes weak convergence of probability measures
 \begin{equation*}
  d(L_n^{\frac{1}{\sqrt{n}}X_n}, \E L_n^{\frac{1}{\sqrt{n}}X_n})
  \to_{\mathbb{P}} 0.
 \end{equation*}
\end{cor}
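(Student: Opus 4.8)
The plan is to deduce Corollary~\ref{C:Patterned} directly from Theorem~\ref{T:main} by verifying that a patterned matrix associated with a link family $\mathcal{G}$ satisfies the block-independence hypothesis $\mathcal{S}(d_n)$ with $d_n = o(n^2)$. First I would construct the partition $\Pi$ of $[n]^2$ explicitly from the link structure. Since the underlying family $\mathcal{Z} = \{Z_z\}_{z\in\Z^d}$ consists of independent random variables, the natural grouping is by fibers of the link function: for each $z\in\Z^d$ appearing in the image of $I_n$, collect all index pairs $(i,j)$ with $i\leq j$ and $I_n(i,j)=z$ into one block, and likewise handle the below-diagonal pairs through the Hermitian constraint $(X_n)_{ji}=\overline{(X_n)_{ij}}$. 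Entries in distinct fibers depend on disjoint sets of the independent variables $Z_z$, so the corresponding random vectors $Y_r$ are stochastically independent.

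\begin{proof}
 Fix $n$ and consider the link function $I_n:[n]^2\to\Z^d$ together with the independent family $\mathcal{Z}=\{Z_z\}_{z\in\Z^d}$. For each value $z$ in the (finite) image of $I_n$, define the block
 \[
  P_z := \{(i,j)\in[n]^2:\ I_n(i,j)=z \text{ or } I_n(j,i)=z\},
 \]
 and let $\Pi=\{P_z\}$ be the resulting collection. Because every entry $(X_n)_{ij}$ equals $Z_{I_n(i,j)}$ (up to conjugation dictated by the Hermitian constraint) and the $Z_z$ are independent, any two blocks $P_z$, $P_{z'}$ with $z\neq z'$ correspond to functions of disjoint subfamilies of $\mathcal{Z}$; hence the vectors $Y_z=\{(X_n)_{ij}\}_{(i,j)\in P_z}$ are stochastically independent. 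By the hypothesis $\abs{I_n^{-1}(z)}=o(n^2)$ for every $z$, and using that a pair $(i,j)$ lands in $P_z$ only if $I_n(i,j)=z$ or $I_n(j,i)=z$, we obtain $\abs{P_z}\leq 2\abs{I_n^{-1}(z)}=o(n^2)$. Setting $d_n:=\max_z\abs{P_z}$ gives $d_n=o(n^2)$ and shows $X_n\in\mathcal{S}(d_n)$.

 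Since by assumption $(X_n)_{n\in\N}\in\mathcal{L}$, all hypotheses of Theorem~\ref{T:main} are met. Applying that theorem yields
 \[
  d(L_n^{\frac{1}{\sqrt{n}}X_n}, \E L_n^{\frac{1}{\sqrt{n}}X_n}) \to_{\mathbb{P}} 0
 \]
 for any metric $d$ metrizing weak convergence, as claimed.
\end{proof}

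The argument is essentially bookkeeping: the only genuinely substantive content is packaged inside Theorem~\ref{T:main}, which we are entitled to invoke. The point requiring a small amount of care is the factor introduced by the symmetry constraint, namely that forming $P_z$ from \emph{both} $I_n^{-1}(z)$ and its transpose preserves the $o(n^2)$ bound; this is immediate because the transpose map is a bijection on $[n]^2$ and the maximum over $z$ of $\abs{I_n^{-1}(z)}$ is assumed $o(n^2)$. I would expect no real obstacle beyond confirming that the blocks I define genuinely exhaust $[n]^2$ and remain mutually independent once conjugation is accounted for, both of which follow from the definition of a patterned matrix. The statement involving Theorem~\ref{T:2nd} (for singular values of non-Hermitian patterned ensembles) would be handled in exactly the same way, replacing $[n]^2$ by $[n]\times[N]$ and invoking Theorem~\ref{T:2nd} in place of Theorem~\ref{T:main}.
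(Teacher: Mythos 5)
Your argument is correct and is precisely the deduction the paper intends: Corollary~\ref{C:Patterned} is stated without proof, with only the remark that Theorems~\ref{T:main} and \ref{T:2nd} yield it, and the intended verification is exactly yours --- partition $[n]^2$ into the fibers of the link function, note that the resulting vectors are functions of disjoint subfamilies of the independent family $\mathcal{Z}$, and apply Theorem~\ref{T:main}. Two small points of care: (i) you paraphrase the hypothesis as $\max_z\abs{I_n^{-1}(z)}=o(n^2)$, while the corollary literally asserts the bound for each fixed $z$; the uniform version is what the application of Theorem~\ref{T:main} requires (the maximizing $z$ may move with $n$, so the pointwise statement does not formally imply it), and your reading is the one the author must intend; (ii) for a non-symmetric link function your sets $P_z$ need not be pairwise disjoint, but defining $P_z$ through $I_n(\min(i,j),\max(i,j))$ --- which is all that enters the definition of a patterned Hermitian matrix --- repairs this and still gives $\abs{P_z}\le 2\abs{I_n^{-1}(z)}$.
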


Setting $I_n(i,j) = (\min(i,j),\max(i,j))$ 
restores the generic Wigner ensemble.

Setting $I_n(i,j) = \abs{i-j}$ or $I_n(i,j) = i+j$ 
with $\mathcal{Z}$ being an i.i.d. family
results in Toeplitz and Hankel ensembles respectively, 
considered firstly in the influential paper \cite{Bai}.
The problem of the convergence of their ESDs
remained unsolved until the work \cite{BDJ} appeared, where the
authors prove even the almost sure convergence
of $T_n/\sqrt{n}$ and $H_n/\sqrt{n}$ to some deterministic distributions
that do not depend on the law of $Z_0$ and have unbounded support.

Setting $I_n(i,j) = i+j \pmod{n}$ or 
$I_n(i,j) = n/2 - \abs{n/2-\abs{i-j}}$ results in
reversed circulant and symmetric circulant ensembles respectively.
These ensembles (and more general $G$-circulants) were
extensively studied 
(c.f. \cite{Ada17,BHS,Mec09,Mec12}).

It can be easily seen that 
under some mild assumptions on the family $\mathcal{Z}$
all these ensembles satisfy hypothesis
of Corollary \ref{C:Patterned}.
Moreover, whenever all elements of $\mathcal{Z}$ are with zero mean,
infinite variance and in the domain of
attraction of the Gaussian distribution, then all these ensembles
satisfy assumptions of Proposition~\ref{P:HeavyTails} as well.

Additionally, now Theorem \ref{T:main} allows us to simplify the proofs
from \cite{Ada17} and \cite{Mec12} where the authors strengthen
convergence in expectation to convergence in probability
(c.f. the proof of Theorem~4.1 in \cite{Mec12}
 and the proof of Theorem~1.5 in \cite{Ada17}).

\subsection{Band and block matrices with correlation structure}

A classical band matrix ensemble consists of matrices whose entries are 
independent and equal zero at far distance from the diagonal.
It is known (c.f. \cite{AnZe}) that if one assumes some regularity
of the distribution of the entries, then the ESD of such matrices converges
almost surely to the semicircular measure.

There are few papers however covering the behavior of the ESD of
band matrices with dependent entries. Some of the best known results can be
found in \cite{Sly} and \cite{ROBS}.
The last paper deals with a wider class of block matrices.
Such ensembles arise naturally in applications, e.g. in
wireless communication theory in the Multiple-Input Multiple-Output 
(MIMO) systems with Intersymbolic Interference (ISI).
The capacity of such system, with $n$ transmit antennas 
and $m$ receive antennas, can be described in terms of the ESD of the matrix
$GG^\ast$, where $G$ is a band random matrix consisting of the 
finite number of matrices $(A_l)$ of the size $n\times m$. 
The elements of $A_l$ are independent and the correlation structure 
between $A_i$ and $A_j$ is given 
(for a more precise formulation we refer to \cite{LaSto}, Chapter 2).
In \cite{ROBS}, the authors have proved (in the Gaussian case), 
that the ESD of (appropriately normalized) $GG^\ast$
converges almost surely, as $n,m$ tend to infinity, to some deterministic
probability measure described in terms of its Cauchy transform.
Using Theorem~\ref{T:2nd}, one immediately deduces that the empirical 
spectral measure is concentrated around its expectation, which in 
combination with their analysis of the expected spectral measure gives a
weaker property of weak convergence in probability. 
However, as can be easily seen from 
the proof of Theorem~\ref{T:generalization}, 
in this case our argument gives in fact the almost sure 
sure convergence, since the size of independent blocks remains bounded.


\section{Auxiliary lemmas, facts and definitions}\label{S:AuxLemm}
We start with recalling some definitions and important results. 

\begin{defn}
 We say that a random vector $X$ satisfies the (subgaussian) concentration
 property with positive constants $C$ and $c$ with respect to the family
 $\mathcal{F}$ if
 \begin{equation}\label{D:CP}
 \Prob{ \abs{f(X)-\M f(X)} > t }
 \leq
 Ce^{-t^2/c}
 \end{equation}
 for all $t\geq 0$ and for every every $f\in\mathcal{F}$
 (with $\M f$ denoting the median of $f$). 
\end{defn}

\begin{rk}\label{R:MeanMedian}
 A standard observation is that in \eqref{D:CP} one can replace the median 
 by mean (at the cost of enlarging of $c$ by some multiplicative factor
 $\widetilde{c}$ depending only on $C$).
\end{rk}

Substituting $\mathcal{F} = C_L(\R)$ 
restores the definition of the classical subgaussian concentration property
to which we will refer simply as $CP(C,c)$ and substituting
$$
\mathcal{F} = \lbrace f\in C_L(\R): \, f \text{ is convex} \rbrace
$$
restores the weaker notion of the convex concentration property 
(as stated in e.g. \cite{MS}), to which we will refer as $CCP(C,c)$.


%

Some convex concentration results concerning
the spectral distribution of random matrices were firstly discovered 
by Guionnet and Zeitouni (c.f. \cite{GZ}). 
The proofs are mostly based on the
famous theorem due to Talagrand (c.f. \cite{Ta1,Led})
whose corollary we state below.

\begin{thm}[\cite{Mec02}, Corollary 4]
\label{T:Talagrand}
 Let $V$ be the direct sum of the normed vector spaces 
 $(V_i,\norm{\cdot}_i)_{1\leq i\leq N}$, equipped with the norm 
 $\norm{(v_1,\ldots,v_N)} = \sqrt{\sum \norm{v_i}_i^2}$. 
 Let $X_i$ be a random variable taking values in $V_i$ s.t.
 $\essup \norm{X_i}_i \leq \rho$. 
 Then the $V$-valued random vector $(X_1,\ldots,X_N)$ satisfies 
 CCP$(4,16\rho^2)$.
\end{thm}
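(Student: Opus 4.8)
The plan is to reconstruct the standard derivation of convex concentration from Talagrand's convex distance inequality, since the statement is quoted from \cite{Mec02}. Recall that for a product space with independent coordinates, Talagrand's inequality reads $\Prob{A}\cdot\Prob{d_T(X,A)\geq s}\leq e^{-s^2/4}$ for every measurable $A$ and $s\geq 0$, where the convex distance admits the convex-hull characterization
\[
 d_T(x,A)=\min\bigl\{\abs{v}_2:\ v\in\conv U_A(x)\bigr\},
 \qquad
 U_A(x)=\bigl\{(\I{\{x_i\neq y_i\}})_{i=1}^N:\ y\in A\bigr\}\subseteq\{0,1\}^N,
\]
with $\abs{\cdot}_2$ the Euclidean norm on $\R^N$. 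The first step is to record this and apply it to $X=(X_1,\ldots,X_N)$, whose coordinates are independent by hypothesis.

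The key geometric step I would isolate as a lemma is a comparison between the convex distance and the genuine $V$-norm distance to the convex hull, and this is where the boundedness $\essup\norm{X_i}_i\leq\rho$ enters: I claim $\dist\bigl(x,\conv A\bigr)\leq 2\rho\, d_T(x,A)$. To prove it, I would take a minimizer $v=\sum_j\lambda_j s^{(j)}\in\conv U_A(x)$ attaining $\abs{v}_2=d_T(x,A)$, where $s^{(j)}$ is the disagreement indicator of some $y^{(j)}\in A$, and set $z=\sum_j\lambda_j y^{(j)}\in\conv A$. Since $\norm{x_i-y^{(j)}_i}_i\leq 2\rho\, s^{(j)}_i$ coordinatewise, the triangle inequality in each $V_i$ gives $\norm{x_i-z_i}_i\leq 2\rho\, v_i$, whence $\norm{x-z}^2=\sum_i\norm{x_i-z_i}_i^2\leq 4\rho^2\abs{v}_2^2$, which is the claim.

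With this in hand, I would deduce both tails of a convex $1$-Lipschitz $f$ about its median $m=\M f$ by choosing, in each case, a \emph{sublevel} set as the reference set $A$, so that convexity is available in the right direction. For the upper tail I take $A=\{f\leq m\}$, so $\Prob{A}\geq 1/2$; if $f(x)\geq m+t$ but $d_T(x,A)<t/(2\rho)$, the lemma yields $z\in\conv A$ with $\norm{x-z}<t$, and since $A$ is a sublevel set convexity gives $f(z)\leq m$, whereupon the Lipschitz bound forces $f(x)\leq f(z)+\norm{x-z}<m+t$, a contradiction; thus $\{f\geq m+t\}\subseteq\{d_T(\cdot,A)\geq t/(2\rho)\}$ and Talagrand gives $\Prob{f\geq m+t}\leq 2e^{-t^2/(16\rho^2)}$. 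The subtle point is the lower tail, which I handle by taking the deviation set itself, $A=\{f\leq m-t\}$, as the reference set: for any $x$ with $f(x)\geq m$, the same convexity-plus-Lipschitz argument shows $d_T(x,A)\geq t/(2\rho)$, so $\tfrac12\leq\Prob{f\geq m}\leq\Prob{d_T(X,A)\geq t/(2\rho)}\leq\Prob{A}^{-1}e^{-t^2/(16\rho^2)}$, giving $\Prob{f\leq m-t}\leq 2e^{-t^2/(16\rho^2)}$. Adding the two tails yields $\Prob{\abs{f(X)-\M f(X)}>t}\leq 4e^{-t^2/(16\rho^2)}$, i.e. CCP$(4,16\rho^2)$. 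The main obstacle is precisely the geometric comparison lemma together with the (only apparently asymmetric) choice of reference sublevel sets that makes convexity usable for both tails; everything else is a routine combination of Talagrand's inequality with the $1$-Lipschitz property.
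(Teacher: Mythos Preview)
The paper does not supply its own proof of this statement; Theorem~\ref{T:Talagrand} is quoted as Corollary~4 of \cite{Mec02} and used as a black box in the proof of Theorem~\ref{T:generalization}. Your reconstruction is correct and is precisely the standard derivation found in \cite{Mec02} (and, ultimately, in \cite{Ta1,Led}): the geometric comparison $\dist(x,\conv A)\leq 2\rho\, d_T(x,A)$ obtained from the convex-hull description of $d_T$, followed by Talagrand's product inequality applied to the sublevel sets $\{f\leq m\}$ and $\{f\leq m-t\}$ to handle the upper and lower tails respectively. There is nothing further to compare.
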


The following is the so-called Hoffman--Wielandt Lemma 
and its immediate corollary
(recall that $C_L(\R)$ denotes the set of all 1-Lipschitz functions on $\R$).
\begin{lemma}[\cite{HW}, Theorem 1]\label{L:HW}
 Let $A,B\in M_{n}^{sa}$ with eigenvalues $\lambda_1^A\leq \ldots 
 \leq \lambda_n^A$ and $\lambda_1^B\leq \ldots \leq \lambda_n^B$
 respectively. Then
 \[
   \sum_{i=1}^{n}\abs{\lambda_i^A-\lambda_i^B}^2 \leq \|A-B\|_{HS}^{2}.
 \]
\end{lemma}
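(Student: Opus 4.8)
The plan is to reduce the claimed inequality to a single statement about $\tr(AB)$ and then to solve a linear optimization problem over doubly stochastic matrices. Since $A-B$ is Hermitian, I would first expand
\[
 \norm{A-B}_{HS}^2 = \tr\big[(A-B)^2\big] = \tr A^2 - 2\tr(AB) + \tr B^2
 = \sum_{i=1}^n (\lambda_i^A)^2 - 2\tr(AB) + \sum_{i=1}^n (\lambda_i^B)^2,
\]
while the left-hand side of the lemma equals $\sum_i (\lambda_i^A)^2 - 2\sum_i \lambda_i^A\lambda_i^B + \sum_i (\lambda_i^B)^2$ (the eigenvalues being real). Cancelling the common quadratic terms, the lemma becomes equivalent to
\[
 \tr(AB) \leq \sum_{i=1}^n \lambda_i^A \lambda_i^B .
\]

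Next I would diagonalize. Write $A = U D_A U^*$ and $B = V D_B V^*$ with $U,V$ unitary and $D_A, D_B$ the diagonal matrices of the increasingly ordered eigenvalues. Setting $W = U^* V$ (again unitary) and using cyclicity of the trace gives $\tr(AB) = \tr(D_A W D_B W^*) = \sum_{i,k} \lambda_i^A \lambda_k^B \abs{w_{ik}}^2$, where $w_{ik}$ are the entries of $W$. Because $WW^* = W^*W = I$, the matrix $S = (\abs{w_{ik}}^2)_{i,k}$ has nonnegative entries and all row and column sums equal to $1$; that is, $S$ is doubly stochastic.

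Finally I would maximize. The quantity $\sum_{i,k} \lambda_i^A \lambda_k^B s_{ik}$ is linear in $S$, so its maximum over the compact convex set of doubly stochastic matrices is attained at an extreme point; by Birkhoff's theorem these are exactly the permutation matrices, so the maximum equals $\max_\pi \sum_i \lambda_i^A \lambda_{\pi(i)}^B$. Since both eigenvalue sequences are sorted increasingly, the rearrangement inequality shows this maximum is attained at the identity permutation, which yields $\tr(AB) \leq \sum_i \lambda_i^A \lambda_i^B$ and hence the lemma.

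I expect the substantive step to be the last one, namely the combination of Birkhoff's theorem with the rearrangement inequality; the algebraic reduction and the trace computation are routine. A self-contained alternative that avoids citing Birkhoff is to argue directly that the linear functional $S \mapsto \sum_{i,k}\lambda_i^A\lambda_k^B s_{ik}$ can be increased (weakly) by a two-by-two exchange on the entries of $S$ whenever $S$ is not a permutation realizing the sorted matching; such a swapping argument simultaneously performs the extreme-point reduction and the rearrangement step.
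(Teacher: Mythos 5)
Your argument is correct and is precisely the classical Hoffman--Wielandt proof: the paper gives no proof of its own but cites \cite{HW}, whose argument is exactly this reduction to $\tr(AB)\leq\sum_i\lambda_i^A\lambda_i^B$ followed by the observation that $(\abs{w_{ik}}^2)_{i,k}$ is doubly stochastic, Birkhoff's theorem, and the rearrangement inequality. All steps check out, including the trace computation and the extreme-point reduction, so there is nothing to add.
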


 
\begin{cor} \label{C:HW}
 For every $f\in C_L(\R)$ the map 
 $M_n^{sa}(\C) \ni X\to \int f\,dL_n^{\frac{1}{\sqrt{n}}X}$ is 
 $\frac{1}{n}$-Lipschitz with respect to the 
 Hilbert-Schmidt norm.
\end{cor}

%
%
%

We will also need the following classical observation (sometimes called
Klein's lemma). For a proof we refer to 
\cite{GZ}, Lemma 1.2 or \cite{MOA}, Theorem 9.G.1.

\begin{lemma}\label{L:Klein}
 If $f$ is a real valued convex function on $\R$, then the mapping
 $M_n^{sa}(\C)\ni X\to\int fd\,L_n^{X}$ is convex.
\end{lemma}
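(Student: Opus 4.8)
The plan is to reduce the statement to the convexity of the trace functional. Writing $X\in M_n^{sa}$ in its spectral decomposition, the definition of $L_n^{X}$ gives $\int f\,dL_n^{X} = \frac{1}{n}\sum_{i=1}^n f(\lambda_i) = \frac{1}{n}\tr f(X)$, where $f(X)$ is understood through the functional calculus. Hence it suffices to prove that $X\mapsto\tr f(X)$ is convex on $M_n^{sa}$.

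Fix $A,B\in M_n^{sa}$ and $t\in[0,1]$, and set $C=tA+(1-t)B$. First I would diagonalize the convex combination: let $\{u_i\}_{i=1}^n$ be an orthonormal eigenbasis of $C$, so that $\inprod{u_i}{C u_i}$ equals the $i$-th eigenvalue of $C$ and therefore $\tr f(C)=\sum_{i=1}^n f(\inprod{u_i}{C u_i})$. Since each map $X\mapsto\inprod{u_i}{X u_i}$ is linear, we have $\inprod{u_i}{C u_i}=t\inprod{u_i}{A u_i}+(1-t)\inprod{u_i}{B u_i}$, and the pointwise convexity of $f$ yields
\[
 \tr f(C)\le t\sum_{i=1}^n f(\inprod{u_i}{A u_i}) + (1-t)\sum_{i=1}^n f(\inprod{u_i}{B u_i}).
\]

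The remaining and key step is the diagonal comparison inequality: for any $A\in M_n^{sa}$ and any orthonormal basis $\{u_i\}$,
\[
 \sum_{i=1}^n f(\inprod{u_i}{A u_i}) \le \tr f(A).
\]
To prove this I would expand $A=\sum_j\lambda_j v_j v_j^\ast$ in its own orthonormal eigenbasis $\{v_j\}$ and set $p_{ij}=\abs{\inprod{u_i}{v_j}}^2$. Because both $\{u_i\}$ and $\{v_j\}$ are orthonormal bases, the matrix $(p_{ij})$ is doubly stochastic, and $\inprod{u_i}{A u_i}=\sum_j p_{ij}\lambda_j$ exhibits each diagonal entry as a convex combination of the eigenvalues of $A$. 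Jensen's inequality then gives $f(\inprod{u_i}{A u_i})\le\sum_j p_{ij} f(\lambda_j)$, and summing over $i$ while using $\sum_i p_{ij}=1$ produces $\sum_i f(\inprod{u_i}{A u_i})\le\sum_j f(\lambda_j)=\tr f(A)$. Applying this to both $A$ and $B$ in the previous display gives $\tr f(C)\le t\,\tr f(A)+(1-t)\,\tr f(B)$; dividing by $n$ establishes the asserted convexity.

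I expect the main obstacle to be precisely this doubly-stochastic step, namely recognizing the diagonal of $A$ in an arbitrary orthonormal basis as a doubly-stochastic average of its spectrum (equivalently, invoking Schur--Horn majorization together with the Hardy--Littlewood--P\'olya inequality); the surrounding arguments using convexity of $f$ and linearity of $X\mapsto\inprod{u_i}{X u_i}$ are routine.
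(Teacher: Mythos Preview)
Your argument is correct and is precisely the standard Peierls-type proof of Klein's lemma (equivalently, the Schur--Horn majorization argument combined with Jensen's inequality). The paper does not give its own proof but defers to \cite{GZ}, Lemma~1.2 and \cite{MOA}, Theorem~9.G.1; your write-up is essentially the self-contained version of what one finds in those references, so there is nothing to add.
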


As a consequence of the above
observations, we obtain that whenever the entries 
of a random matrix $X\in M_n^{sa}$ have compact support, then
$X_n$ satisfies convex concentration property 
(w.r.t. the Hilbert-Schmidt norm) which will allow us to estimate 
$\int f\,dL_n-\E\int f\,dL_n$ for big $n$.
This fact will play a crucial role in the proof of 
Theorem~\ref{T:generalization}.
A similar argument was used to prove convex concentration
in \cite{GuLe} (c.f. proof of Thm.~6).
For other results concerning convex concentration of random matrices
see e.g. \cite{Mec02,Del}.

Finally, the following facts will be important for
the proof of Proposition~\ref{P:HeavyTails}

 \begin{lemma}[\cite{BaiSilv}, Theorem A.43]\label{L:rankIneq}
  Let $X$ and $Y$ be two $n\times n$ Hermitian matrices. Then
  \[
   \norm{L_n^{A} - L_n^B} \leq \frac{1}{n}\rank(A-B),
  \]
  where $\norm{\cdot}$ denotes the Kolomogorov distance between measures.
 \end{lemma}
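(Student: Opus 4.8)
The plan is to reduce the Kolmogorov distance to a pointwise bound on the eigenvalue counting functions, and then to exploit the fact that a low-rank perturbation leaves the associated quadratic form unchanged on a large subspace. Working with the matrices $A$ and $B$ appearing in the displayed inequality, write $N_A(x) := \#\{i : \lambda_i^A \leq x\}$, so that the cumulative distribution function of $L_n^A$ equals $N_A(x)/n$ and the Kolmogorov distance is $\tfrac{1}{n}\sup_x\abs{N_A(x)-N_B(x)}$. The claim then reduces to the purely pointwise estimate
\[
 \sup_x \abs{N_A(x) - N_B(x)} \leq \rank(A-B).
\]

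The first step is to record the variational description of $N_A(x)$ coming from the Courant--Fischer min-max theorem. For a Hermitian matrix $A$ with eigenvalues $\lambda_1^A\leq\ldots\leq\lambda_n^A$, the inequality $\lambda_k^A\leq x$ holds if and only if some $k$-dimensional subspace $V$ supports the quadratic-form inequality $\inprod{Av}{v}\leq x\norm{v}^2$ for all $v\in V$; since the eigenvalues are ordered, $N_A(x)$ is exactly the largest such $k$, so that
\[
 N_A(x) = \max\bigl\{\dim V : V\subseteq\C^n,\ \inprod{Av}{v}\leq x\norm{v}^2 \text{ for all } v\in V\bigr\}.
\]

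The second and central step is a dimension count. Set $E = A-B$, a Hermitian matrix of rank $r:=\rank(A-B)$, and let $W:=\ker E$, so that $\dim W = n-r$ and $\inprod{Av}{v}=\inprod{Bv}{v}$ for every $v\in W$ (because $Ev=0$ forces $\inprod{Ev}{v}=0$). Fix $x$ and let $V$ be a subspace of dimension $N_A(x)$ on which $A\leq xI$ in the quadratic-form sense. Then $V\cap W$ has dimension at least $\dim V + \dim W - n = N_A(x)-r$, and on it $\inprod{Bv}{v}=\inprod{Av}{v}\leq x\norm{v}^2$; by the variational formula this yields $N_B(x)\geq N_A(x)-r$. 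Interchanging the roles of $A$ and $B$ gives the reverse inequality, so $\abs{N_A(x)-N_B(x)}\leq r$ for every $x$, and dividing by $n$ completes the argument. I expect the only delicate point to be making the variational characterization of the first step fully rigorous---in particular justifying the \emph{equality} (rather than merely an inequality) between $N_A(x)$ and the maximal admissible subspace dimension directly from Courant--Fischer; once that is in hand, the remainder is the elementary subspace-intersection dimension inequality.
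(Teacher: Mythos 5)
Your proof is correct. Note that the paper itself gives no proof of this lemma: it is quoted verbatim (modulo the $X,Y$ versus $A,B$ typo in the statement) from Bai and Silverstein, Theorem~A.43, and used as a black box in the proof of Proposition~\ref{P:HeavyTails}. Your argument is a clean, self-contained derivation: the identification of the Kolmogorov distance with $\tfrac1n\sup_x\abs{N_A(x)-N_B(x)}$ is immediate, the variational characterization $N_A(x)=\max\{\dim V:\ \inprod{Av}{v}\leq x\norm{v}^2 \text{ on } V\}$ does follow in both directions from Courant--Fischer (the span of the eigenvectors with eigenvalue at most $x$ witnesses ``$\geq$'', and min-max over $k$-dimensional subspaces gives ``$\leq$''), and the intersection with $\ker(A-B)$ together with $\dim(V\cap W)\geq\dim V+\dim W-n$ closes the argument. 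The point you flag as delicate is in fact unproblematic for the reason just given. For comparison, the textbook route to the same inequality decomposes the Hermitian perturbation $A-B$ into $r=\rank(A-B)$ rank-one Hermitian summands via its spectral decomposition and applies Weyl/Cauchy interlacing $r$ times, moving $N(x)$ by at most one per step; your subspace-intersection argument gets the same conclusion in a single step and avoids invoking interlacing. Either proof is acceptable here.
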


 \begin{lemma}[\cite{Zha}, proof of Theorem~3.32]\label{L:momentEst}
 Let $X\in M_n^{sa}$ with rows $x_1,\ldots,x_n$ and eigenvalues 
 $\lambda_1,\ldots,\lambda_n$. Then for every $0< r \leq 2$
 \[
  \sum_{i=1}^{n}\abs{\lambda_i}^r \leq \sum_{i=1}^{n}\norm{x_i}_2^r.
 \]
 \end{lemma}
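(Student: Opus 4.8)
The plan is to reduce the claim to a single application of Jensen's inequality, exploiting the fact that for a Hermitian matrix the squared row norms are doubly stochastic averages of the squared eigenvalues. First I would diagonalize $X=U\Lambda U^\ast$ with $U$ unitary and $\Lambda$ the diagonal matrix carrying the eigenvalues $\lambda_1,\ldots,\lambda_n$. Since $x_i$ is the $i$-th row of $X$, we always have $\norm{x_i}_2^2=\sum_j\abs{X_{ij}}^2=(XX^\ast)_{ii}$, and because $X^\ast=X$ this equals $(X^2)_{ii}$. Writing $X^2=U\Lambda^2U^\ast$ then yields the explicit formula
\[
 \norm{x_i}_2^2 = \sum_{j=1}^n \abs{U_{ij}}^2\,\lambda_j^2 .
\]

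Next I would set $S_{ij}:=\abs{U_{ij}}^2$ and observe that, because $U$ is unitary, the matrix $S$ is doubly stochastic: each of its rows and columns sums to $1$. Hence each squared row norm $\norm{x_i}_2^2$ is a convex combination of the numbers $\lambda_1^2,\ldots,\lambda_n^2$. The role of the hypothesis $0<r\le 2$ is exactly that the function $\phi(t)=t^{r/2}$ is concave on $[0,\infty)$ (for $r=2$ it is linear and the inequality degenerates into the identity $\sum_i\norm{x_i}_2^2=\tr X^2=\sum_i\lambda_i^2$). Applying the concave form of Jensen's inequality to $\phi$ and to the convex combination above gives, for each $i$,
\[
 \norm{x_i}_2^r = \phi\Big(\sum_j S_{ij}\lambda_j^2\Big) \ge \sum_j S_{ij}\,\phi(\lambda_j^2) = \sum_j S_{ij}\,\abs{\lambda_j}^r .
\]

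Finally I would sum over $i$ and interchange the order of summation, using that the columns of $S$ sum to $1$, i.e. $\sum_i S_{ij}=1$ for every $j$. This collapses the right-hand side to $\sum_j\abs{\lambda_j}^r$ and produces the claimed bound $\sum_i\norm{x_i}_2^r\ge\sum_j\abs{\lambda_j}^r$. Conceptually the statement is nothing but the Schur--Horn phenomenon that the diagonal of $X^2$ is majorized by its spectrum, combined with the order-reversing behaviour of concave functions under majorization; the elementary route through the doubly stochastic matrix $S$ lets one avoid quoting majorization theory wholesale and keeps the argument self-contained.

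The main obstacle here is conceptual rather than computational: the inequality runs in the ``wrong'' direction for the usual convex comparison, so one must invoke the concave case of Jensen (equivalently, remember that concave functions reverse majorization), and getting this direction right is precisely what forces the constraint $r\le 2$ through the concavity of $t\mapsto t^{r/2}$. Once the doubly stochastic structure of $S$ is verified, no further difficulty arises.
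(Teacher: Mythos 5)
Your proof is correct: the identity $\norm{x_i}_2^2=(X^2)_{ii}=\sum_j\abs{U_{ij}}^2\lambda_j^2$, the double stochasticity of $S_{ij}=\abs{U_{ij}}^2$, and the concavity of $t\mapsto t^{r/2}$ for $0<r\le 2$ combine exactly as you describe, and summing over $i$ with $\sum_i S_{ij}=1$ closes the argument. The paper does not prove this lemma but cites Zhan's book, whose proof runs along the same Schur--Horn/majorization lines, so your write-up is essentially the standard argument made self-contained.
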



\section{Proofs}\label{S:Proofs}

\subsection*{Proofs of Propositions \ref{P:Tight} and \ref{P:Reduction}}

In what follows, we will keep the notation 
$L_n = L_n^{\frac{1}{\sqrt{n}}X_n}$. 
\begin{proof}[Proof of Proposition \ref{P:Tight}]
 Fix $\varepsilon > 0$, let $C=[-K,K]^c$ for some $K>0$ and set
 \[
  A_n =  \Big\{	
    \frac{1}{n^2}\sum_{i,j=1}^n |(X_n)_{ij}|^2 \I{\{|(X_n)_{ij}|^2 > M\}}
    < 1
   \Big\}.
 \]
 Then
 \begin{align*}
  \E L_n(C) &=
  \E L_n(C)\I{A_n^c} + \E L_n(C)\I{A_n}\\
  &\leq
  \Prob{A_n^c} + \E\frac{\int x^2\,dL_n}{K^2}\I{A_n}
  \leq
  \Prob{A_n^c} + \frac{M+1}{K^2},
 \end{align*}
 where the first inequality is an application of Markov's inequality
 and the second uses the fact that
 $\int x^2\,dL_n = \frac{1}{n^2}\norm{X_n}_{HS}^2$.
 Since $X_n\in\mathcal{L}$, 
 choosing appropriate $M$ provides that 
 $\Prob{A_n^c} < \varepsilon/2$ for $n$ big enough.
 Taking then $K$ big enough yields $E L_n(C) < \varepsilon$
 for every $n$ and the result follows.
\end{proof}


\begin{proof}[Proof of Proposition \ref{P:Reduction}]

 Recall that a sequence of random elements with values in a 
 metric space converges in probability to some random element 
 if and only if from each of its subsequences one can choose 
 a further subsequence that converges almost surely to that element.
 Moreover, by Prokhorov's theorem, 
 a family of measures on a Polish space is tight if and only if it
 is sequentially weakly compact (c.f. \cite{Kal}, Lemma~3.2, Prop.~4.21).

 $\ref{eq:CondMetr}\Rightarrow \ref{eq:CondFuncC_b}.$ 
 Take any sequence $N\subset\N$. We will find  
 a further subsequence along which $\int f\,d L_n - \E\int f\, dL_n$
 converges a.s. to zero for any $f\in C_b(\R)$.
 By tightness, there exist $N'\subset N$ and a probability measure 
 $\mu$ s.t. $\E L_{n}\Rightarrow \mu$ along $N'$.
 By the triangle inequality, $d(L_n, \mu)\to_{\mathbb{P}} 0$ along $N'$ and thus
 there exists a further subsequence $N''\subset N'$ s.t.
 $d(L_n, \mu)\to 0$ a.s. along $N''$, which yields the result.

 Implication $\ref{eq:CondFuncC_b}\Rightarrow \ref{eq:CondFuncC_cC_L}$ is trivial.
  
 $\ref{eq:CondFuncC_cC_L}\Rightarrow \ref{eq:CondMetr}.$
 Consider the metric $d$ on the set of all probability measures on $\R$ given by
 \[
  d(\mu,\nu) = \sum_{k\in\N}2^{-n}\abs{\int f_k\,d\mu - \int f_k\,d\nu},
 \]
 where $\{f_k\}_{k\in\N}\subset C_c(\R)$ 
 is some dense subset of the unit ball (in the $\sup$ norm) of $C_c(\R)$. 
 It is easy to see that $d$ metrizes weak convergence of probability measures. 
 We have
 \begin{align*}
  \Prob{d(L_n,\E L_n) > \varepsilon}
  &\leq
  \Prob{ \sum_{k=1}^{N}2^{-k}
   \abs{\int f_k\,dL_n - \E\int f_k\,dL_n} > \frac{\varepsilon}{2}}\\
  &\leq
  \sum_{k=1}^{N}
  \Prob{\abs{\int f_k\,dL_n - \E\int f_k\,dL_n} >\frac{2^{k-1}\varepsilon}{N}},
 \end{align*}  
 for some $N$ big enough, depending only on $\varepsilon$.
 Now, for every $k$, there exists a constant $L_k$ 
 and a compactly supported $L_k$-Lipschitz function $g_k$ s.t.
 $\norm{f_k-g_k}_{\infty}<\frac{2^{k-1}\varepsilon}{3N}$, whence using the triangle
 and \ref{eq:CondFuncC_cC_L} yields \ref{eq:CondMetr}.

 To prove the second part of Proposition \ref{P:Reduction},
 assume firstly conditions $\ref{eq:CondMetr} -\ref{eq:CondFuncC_cC_L}$, 
 fix $t,\varepsilon>0$ and consider a function $f\in C_L$. 
 Let $f_r$ be a continuous function, 
 equal $f$ on the interval $[-r,r]$ and constant beyond it. 
 Since $f_r\in C_b(\R)$, then by assumption
 $\int f_r\,dL_n-\E\int f_r\,dL_n\to_{\mathbb{P}} 0$.
 We have
 \[
 \E\int \abs{f-f_r}\,dL_n
 \leq
 \E\int \abs{x}\I{\{\abs{x}>r\}}\, dL_n
 \leq
 \frac{1}{r}\E\int \abs{x}^2\, dL_n
 =
 \frac{1}{r}\E\frac{1}{n^2}\norm{X_n}_{HS}^2 \leq \delta
 \]
 for any $\delta$ for $r$ big enough by the uniform integrability of 
 $\{\abs{(X_n)_{ij}}^2\}_{1\leq i,j \leq n\in\N}$. 
 Take now $\delta \leq \min(\frac{t}{3},\frac{t\varepsilon}{6})$
 and choose $r$ such that the above estimate holds.
 Applying now the triangle and Markov's inequalities yields
 \begin{align*}
  \Prob{\abs{\int f\,dL_n-\E\int f\,dL_n} > t}
  &\leq
  \Prob{\abs{\int f\,dL_n-\int f_r\,dL_n} > \frac{t}{3}}\\
  &+
  \Prob{\abs{\int f_r\,dL_n-\E\int f_r\,dL_n} > \frac{t}{3}}
  \leq
  \varepsilon
 \end{align*}
 for $n$ big enough.
 The proof in the opposite direction is immediate.
\end{proof}

\subsection*{Proofs of Theorems \ref{T:main}, \ref{T:2nd} and \ref{T:generalization}}

We will start with proving Theorem~\ref{T:generalization}.
The argument is highly motivated by the work of Guionnet and Zeitouni 
(c.f. \cite{GZ}, Theorem 1.3). The conclusions of Theorem~\ref{T:main} 
and \ref{T:2nd} will then follow easily.
\begin{proof}[Proofs of Theorem \ref{T:generalization}]

	Denote $L_n=L_n^{X_n/\sqrt{n}}$ and let $f\in C_c(\R)\cap C_L(\R)$ 
	be supported in the interval $[-M,M]$.	
	By Proposition~\ref{P:Reduction}, it is enough to show that for every 
	$\delta,t>0$ and $n$ big enough,
	$$\Prob{ \abs{\int f\,dL_n - \E \int f\,dL_n} > t } <\delta.$$
	Let $\delta$ and $t$ be fixed from now on.
	Take some $\varepsilon>0$ (to be fixed later), set 
	$$
	 (X_n^{\varepsilon})_{ij} 
	 = (X_n)_{ij}\I{\{\abs{(X_n)_{ij}} \leq \varepsilon a_n \}}.
	$$
	and denote $L_n^{\varepsilon} = L_n^{X_n^{\varepsilon}/\sqrt{n}}$. 
	Firstly, we will show that there exists	$\varepsilon$ s.t.
	$$\Prob{ \abs{\int f\,dL^{\varepsilon}_n 
	- \E \int f\,dL^{\varepsilon}_n} > \frac{t}{3} } 
	<\frac{\delta}{2}$$
	for $n$ big enough and then that 
	$L_n^{\varepsilon}$ and $L_n$ do not differ much.
	The first step will be achieved by means of Theorem~\ref{T:Talagrand}.
		
	Let $\Pi_n = \{ P^n_1,\ldots,P^n_k \}$ be the partition of $X_n$ into
	independent blocks and $Y_{r,n}^{\varepsilon}$ 
	be the random vector given by the entries 
	$\{(X_n^{\varepsilon})_{ij}\}_{(i,j)\in P^n_r}$. 
	Since $Y_{1,n}^{\varepsilon},\ldots,Y_{k,n}^{\varepsilon}$ 
	are stochastically independent and
	\begin{align*}
	 \essup\norm{Y_{r,n}^{\varepsilon}}_{2} 
	 \leq
	 \sqrt{\sum_{(i,j)\in P_r^n}(\essup\abs{(X_n^{\varepsilon})_{ij}})^2}
	 \leq
	 \sqrt{d_n}\varepsilon a_n,
	\end{align*}
	then Theorem~\ref{T:Talagrand} implies that 
	$X^{\varepsilon}_n$ satisfies CCP$(4,16d_n \varepsilon^2 a_n^2)$.
	
	The aim is now to approximate $f$ with a finite 
	combination of convex functions, 
	which will allow us to exploit CCP of $X^{\varepsilon}_n$. 
	To that end, let $\Delta$ be small enough 
	(to be fixed later) and (following \cite{GZ}) set
	\[
	 g(x) := 
	 \begin{cases}
	  0 ~ \text{ for } x\leq 0,\\
	  x ~ \text{ for } 0\leq x \leq \Delta,\\
	  \Delta ~ \text{ otherwise}.
	 \end{cases}
	\]
	Note that $g$ can be decomposed into a difference of two convex functions.
	Define now recursively $g_0 \equiv 0$,
	\[
	 g_{n+1}(x) = 
	 \begin{cases}
	  g_n(x) + g(x+M-n\Delta) 
	   ~ \text{ if } f(-M+(n+1)\Delta) \geq g_n(-M+n\Delta),\\
	  g_n(x) - g(x+M-n\Delta) 
	   ~ \text{ otherwise }
	 \end{cases}
	\]
	and set $f_\Delta := g_{\ceil{2M/\Delta}}$.
	We can see that $\norm{f-f_{\Delta}}\leq \Delta$ and $f_{\Delta}$ can be
	decomposed into a sum of at most $2\ceil{2M/\Delta}=:\kappa$ different convex 
	and concave 1-Lipschitz functions $\{h_l\}$. 
	Set $\Delta < t/6$.
	Exploiting Corollary~\ref{C:HW}, Lemma~\ref{L:Klein} 
	and CCP of $X_n^{\varepsilon}$ 	(c.f. Remark \ref{R:MeanMedian}) results in
	\begin{align*}
	 \Prob{ \abs{\int f\,dL^{\varepsilon}_n - 
	  \E \int f\,dL^{\varepsilon}_n} > \frac{t}{3} }
	 &\leq
	 \Prob{ \abs{\int f_{\Delta}\,dL^{\varepsilon}_n - 
	  \E \int f_{\Delta}\,dL^{\varepsilon}_n} > \frac{t-6\Delta}{3} }\\
	 & \leq
	 \kappa \sup_{l} \Prob{\abs{\int h_l\,dL^{\varepsilon}_n - 
	  \E\int h_l\,dL^{\varepsilon}_n} > \frac{t-6\Delta}{3\kappa}}\\
	 & \leq
	 4\kappa \exp\left(
	  -\frac{(t-6\Delta)^2}
	   {144\widetilde{c}\kappa^2}\frac{n^2}{d_na_n^2 \varepsilon^2 } \right).
	\end{align*}
	Fix now $\varepsilon$ s.t. for all $n$ the above quantity is smaller than 
	$\frac{\delta}{2}$ (it is possible to do so since $d_n = o(n^2)$).
	Using again Corollary~\ref{C:HW} and
	the condition $(X_n)_{n\in\N}\in\mathcal{L}(a_n)$, we infer that
	\begin{align*}
	 \Prob{\abs{
	  \int f\,dL_n - \int f\,dL_n^{\varepsilon}  } > \frac{t}{3}}
	 &\leq
	 \Prob{\norm{X_n - X^{\varepsilon}_n}_{HS} > \frac{tn}{3}}
	 \\
	 &=
	 \Prob{\frac{1}{n^2}\sum_{i,j=1}^n |(X_n)_{ij}|^2 
	  \I{\{|(X_n)_{ij}| > \varepsilon a_n\}} 
	 > \frac{t^2}{9}}
	 \leq \frac{\delta}{2}
	\end{align*}
	for $n$ big enough.
	Moreover, since the last quantity actually converges to zero with $n$
	(for any $\varepsilon$ and every $t$), we have
	$\int f\,dL_n-\int f\,dL_n^{\varepsilon}\to_{\p} 0$, whence  
	boundedness of	$f$ implies that
	$\E \abs{\int f\,dL_n - \int f\,dL_n^{\varepsilon}} \leq t/3$ 
	for $n$ big enough.
	For $n$ such that all the above estimates hold, the triangle inequality yields
	\begin{align*}
	 \Prob{\abs{
	  \int f\,dL_n
	   - \E \int f\,dL_n
	  } > t} &\leq
	 \Prob{\abs{
	  \int f\,dL_n - \int f\,dL_n^{\varepsilon}
	  } > \frac{t}{3}}\\
	  &+
	 \Prob{\abs{
	  \int f\,dL_n^{\varepsilon}
	   - \E \int f\,dL_n^{\varepsilon}
	  } > \frac{t}{3}} \leq \delta,
	\end{align*}
	which by Proposition~\ref{P:Reduction} concludes the proof.
\end{proof}

\begin{proof}[Proof of Theorem~\ref{T:main}]
	It can be easily checked that $(X_n)_{n\in\N}\in\mathcal{L}$ implies
	$(X_n)_{n\in\N}\in\mathcal{L}(a_n)$ for any sequence $a_n\to\infty$.
	For	$a_n^2 = n^2/d_n$ we have that $d_n=O(n^2/a_n^2)$ and thus $X_n$ satisfies
	the assumptions of Theorem~\ref{T:generalization}.
	Now Propositions~\ref{P:Tight} and \ref{P:Reduction} allow us to conclude
	the proof.
\end{proof}

\begin{proof}[Proof of Theorem~\ref{T:2nd}]
	The proof boils down to the use of the so-called hermitization technique. 
	Consider the matrix
	\[
	 A_n =
	 \begin{bmatrix}
	  0 & X_n \\
	  X_n^\ast & 0
	 \end{bmatrix}.
	\]
	Clearly, the ESD of $Y_n$ can be inferred from the ESD of $A_n$. 
	Moreover, $A_n$
	meets the assumptions of Theorem~\ref{T:main}, which yields the result.
\end{proof}

\subsection*{Proof of Proposition \ref{P:HeavyTails}}

 To prove Proposition \ref{P:HeavyTails}, we will need the following 
 auxiliary fact.
 
 \begin{rk}\label{Rk:HeavyEst}
  If $x$ is in the domain of attraction of the Gaussian distribution,
  $b_n$'s are defined as in \eqref{D:b_n} and 
  $l(t) = \E\abs{x}^2\I{\{\abs{x} <t\}}$, then
 \begin{align*}\label{Eq:Heavy}
  \lim_{n\to\infty}\frac{nl(b_n)}{b_n^2} =1,
  \quad
  \Prob{\abs{x} > t} = o(l(t)/t^2)
  \quad
  \text{and}
  \quad
  \E \abs{x}\I{\{\abs{x}>t\}}= o(l(t)/t).
 \end{align*}
 The first equality follows easily from the definitions of $b_n$ and $l$,
 while to prove the remaining equalities one has to make use of
 the fact that $l$ is slowly varying 
 (c.f.~\cite{Ada13}, Proof of Corollary~2.10).
 \end{rk}

 In what follows, set
 $(\widetilde{X}_n)_{ij} = (X_n)_{ij}\I{\{\abs{(X_n)_{ij}} \leq b_n \}}$.
 
 \begin{lemma}\label{L:HeavyTight}
  If $X_n$'s satisfy the assumptions of Proposition~\ref{P:HeavyTails}, then
  $\E L_n^{{X_n}/{b_n}}$ is tight.
 \end{lemma}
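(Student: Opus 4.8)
The plan is to reduce the claim to a tightness estimate for the truncated matrix $\widetilde{X}_n$ and to control the discrepancy between $\E L_n^{X_n/b_n}$ and $\E L_n^{\widetilde{X}_n/b_n}$ by a rank bound. The obstacle is that $x$ has infinite variance, so the naive second moment $\E\int x^2\,dL_n^{X_n/b_n} = \frac{1}{nb_n^2}\E\norm{X_n}_{HS}^2$ is infinite and Markov's inequality cannot be applied to $X_n$ directly. Truncating each entry at level $b_n$ cures exactly this: the entries of $\widetilde{X}_n$ are bounded, so its spectral moments become finite, while the few discarded entries affect the spectrum only through a low-rank perturbation. Note that the block structure $X_n\in\mathcal{S}(d_n)$ and the condition $d_n=O(n)$ play no role here --- tightness depends only on the common marginal law of the entries.

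First I would compare the two expected measures in the Kolmogorov distance $\norm{\cdot}$. Since $X_n-\widetilde{X}_n$ has a nonzero entry in position $(i,j)$ only when $\abs{(X_n)_{ij}}>b_n$, its rank is at most the number of such entries, so Lemma~\ref{L:rankIneq} together with Jensen's inequality gives
\[
 \norm{\E L_n^{X_n/b_n}-\E L_n^{\widetilde{X}_n/b_n}}
 \leq \frac{1}{n}\E\rank(X_n-\widetilde{X}_n)
 \leq \frac{1}{n}\sum_{i,j=1}^n\Prob{\abs{(X_n)_{ij}}>b_n}
 = n\,\Prob{\abs{x}>b_n}.
\]
By Remark~\ref{Rk:HeavyEst} we have $\Prob{\abs{x}>b_n}=o(l(b_n)/b_n^2)$ and $nl(b_n)/b_n^2\to 1$, hence $l(b_n)/b_n^2\sim 1/n$ and $n\Prob{\abs{x}>b_n}=o(1)$. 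Thus the Kolmogorov distance between the two expected measures vanishes, and since tightness is preserved under $\norm{\cdot}\to 0$ (the two tail masses differ by at most twice the Kolmogorov distance), it suffices to prove tightness of $\E L_n^{\widetilde{X}_n/b_n}$.

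For the truncated matrix I would bound the second spectral moment directly. Using $\sum_i\lambda_i(\widetilde{X}_n)^2=\tr\widetilde{X}_n^2=\norm{\widetilde{X}_n}_{HS}^2$ and Tonelli's theorem,
\[
 \E\int x^2\,dL_n^{\widetilde{X}_n/b_n}
 = \frac{1}{nb_n^2}\E\norm{\widetilde{X}_n}_{HS}^2
 = \frac{1}{nb_n^2}\sum_{i,j=1}^n\E\abs{x}^2\I{\{\abs{x}\leq b_n\}}
 = \frac{nl(b_n)}{b_n^2},
\]
which converges to $1$ by Remark~\ref{Rk:HeavyEst} and is therefore bounded in $n$. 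Markov's inequality then yields $\E L_n^{\widetilde{X}_n/b_n}([-K,K]^c)\leq K^{-2}\,\frac{nl(b_n)}{b_n^2}$, a bound uniform in $n$ that tends to $0$ as $K\to\infty$, which is precisely tightness of $\E L_n^{\widetilde{X}_n/b_n}$. Combining this with the previous paragraph gives tightness of $\E L_n^{X_n/b_n}$. The main obstacle is the infinite variance, and the whole argument hinges on the two asymptotics from Remark~\ref{Rk:HeavyEst}, which are exactly calibrated so that both the rank correction and the truncated second moment stay controlled at the scale $b_n$.
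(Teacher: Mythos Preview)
Your argument is correct, but it differs from the paper's. The paper bounds the \emph{first} spectral moment of $\E L_n^{X_n/b_n}$ directly, without passing through the truncated measure: it invokes Lemma~\ref{L:momentEst} with $r=1$ (the Schatten--row norm inequality $\sum_i|\lambda_i|\le\sum_i\norm{x_i}_2$), splits each row as $(\widetilde{X}_n)_i+((X_n)_i-(\widetilde{X}_n)_i)$, and then controls the two pieces by $\sqrt{nl(b_n)}/b_n=O(1)$ and $\frac{n}{b_n}\E|x|\I{\{|x|>b_n\}}=o(1)$ respectively. Your route instead uses Lemma~\ref{L:rankIneq} to move to the truncated matrix at the level of measures and then computes the \emph{second} spectral moment of $\widetilde{X}_n/b_n$, which reduces exactly to $\frac{nl(b_n)}{b_n^2}$ via the trace identity with no matrix inequality needed. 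Both routes rest on the same asymptotics from Remark~\ref{Rk:HeavyEst}; yours is arguably more elementary in that it avoids Lemma~\ref{L:momentEst} and reuses the rank bound that the paper needs anyway in the proof of Proposition~\ref{P:HeavyTails}, while the paper's approach has the advantage of delivering a uniform first-moment bound on the original (untruncated) expected ESD in a single step.
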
 
 \begin{proof}[Proof of Lemma~\ref{L:HeavyTight}]
 Using Lemma~\ref{L:momentEst} with $r=1$
 and denoting rows of $X_n$ by $(X_n)_i$, we arrive at
 \begin{align*}
  \E\int\abs{x}\,dL_n^{X_n/b_n} 
  &\leq
  \frac{1}{nb_n}\E\left[ \sum_{1\leq i\leq n}
   \norm{(\widetilde{X}_n)_i}_2+\norm{(X_n)_i-(\widetilde{X}_n)_i}_2\right].
 \end{align*}
 Applying arithmetic vs quadratic mean and Jensen's inequalities
 together with Remark~\ref{Rk:HeavyEst} yields
 \[
  \frac{1}{nb_n}\E\sum_{1\leq i\leq n}\norm{(\widetilde{X}_n)_i}_2
  \leq
  \frac{1}{\sqrt{n}b_n}\E\norm{\widetilde{X}_n}_{HS}
  \leq
  \frac{1}{\sqrt{n}b_n} \sqrt{\E\sum_{ij} \abs{(\widetilde{X}_n)_{ij}}^2}
  \leq
  \frac{\sqrt{nl(b_n)}}{b_n}
  =
  O(1),
 \]
 whereas the norm inequality $\norm{\cdot}_2\leq\norm{\cdot}_1$
 and Remark~\ref{Rk:HeavyEst} give
 \begin{align*}
  \frac{1}{nb_n}\E\sum_{1\leq i\leq n}\norm{(X_n)_i-(\widetilde{X}_n)_i}_2
  \leq
  \frac{1}{nb_n}\E\sum_{1\leq i\leq n}\norm{(X_n)_i-(\widetilde{X}_n)_i}_1
  &=
  \frac{n}{b_n}\E\abs{x}\I{ \{ \abs{x}>b_n \} } = o(1).
 \end{align*}
 The above estimates give a uniform upper bound on the first moment of 
 $\E L_n^{X_n/b_n}$, whence the conclusion follows.
 \end{proof}
 
 \begin{proof}[Proof of Proposition~\ref{P:HeavyTails}]

 Denote by $L_n$ and $\widetilde{L}_n$ the ESDs of $b_n^{-1}X_n$ and 
 $b_n^{-1}\widetilde{X}_n$ respectively.
 Recall that Kolomogorov's metric defined as the sup distance between
 cumulative distribution functions dominates L\'{e}vy-Prokhorov's metric
 defined as
 \[
  \pi(\mu,\nu)
   =
  \inf\{ \varepsilon >0:~\
   \forall_{t\in\R}~\
    F_\nu(t-\varepsilon)-\varepsilon \leq 
    F_\mu(t)
    \leq
    F_\nu(t+\varepsilon)+\varepsilon   
  \},
 \]
 where $F_\sigma$ denotes the c.d.f. of a measure $\sigma$.
 Note that the latter metrizes weak convergence of probability measures 
 (c.f. \cite{Bill}, Theorem~6.8).
 Now, by Lemma \ref{L:rankIneq} and Remark~\ref{Rk:HeavyEst}
 \begin{align*}
  \E \pi(L_n,\widetilde{L}_n) \leq
  \E\norm{L_n - \widetilde{L}_n} \leq
  \E\frac{1}{n}\rank(X_n-\widetilde{X}_n)
  &\leq
  \E\frac{1}{n}\sum_{1 \leq i,j\leq n} \I{\{ \abs{(X_n)_{ij}} > b_n \}}\\
  &=
  n\Prob{|x| > b_n}=
  o(1)
 \end{align*}
 and thus we reduced the problem to proving the convergence of $\widetilde{L}_n$.
 We will achieve that by showing that 
 $\frac{\sqrt{n}}{b_n}\widetilde{X}_n$ falls into the regime of 
 Theorem~\ref{T:generalization} with $a_n=\sqrt{n}$.
 By Markov's inequality
 \begin{align*}
  \Prob{
  \frac{1}{n^2}\sum_{i,j=1}^n 
   |\frac{\sqrt{n}}{b_n}(\widetilde{X}_n)_{ij}|^2 
   \I{\{\frac{\sqrt{n}}{b_n}|(\widetilde{X}_n)_{ij}| > \varepsilon\sqrt{n}\}} 
   > \varepsilon}
  &=
  \Prob{
  \frac{1}{nb_n^2}\sum_{i,j=1}^n 
   |({X}_n)_{ij}|^2 
   \I{\{b_n \geq |({X}_n)_{ij}| > \varepsilon b_n\}} 
   > \varepsilon}\\
   &\leq
   \frac{nl(b_n)}{\varepsilon b_n^2}\frac{l(b_n)-l(\varepsilon b_n)}{l(b_n)}
   =o(1)
 \end{align*}
 where in the last equality we have used Remark~\ref{Rk:HeavyEst}
 and the fact that $l$ is slowly varying.
 Thus 
 $(\frac{\sqrt{n}}{b_n}\widetilde{X}_n)_{n\in\N}\in\mathcal{L}(a_n)$, whence
 applying Lemma~\ref{L:HeavyTight} and Proposition~\ref{P:Reduction} 
 yields the result.
 \end{proof}


\section{Acknowledgment}

The results presented in this article are a part of a master's 
thesis written at the University of Warsaw under supervision of 
R. Adamczak, who suggested the problem, gave the theoretical tools
to solve it and guided me patiently through the whole process of
creating a publishable paper, for which I am sincerely grateful.

\bigskip

\bibliographystyle{plain}

\end{document}